\documentclass[10pt]{article}
%
% PACKAGES
\usepackage{times,amsmath,amssymb,exscale,mathrsfs}
\usepackage{pst-all}
\usepackage{graphicx}
\usepackage{xcolor} % For colored text
\usepackage{epsfig}
\newtheorem{definition}{Definition}%[section]
\newtheorem{theorem}{Theorem}%[section]
\newtheorem{remark}{Remark}%[section]
\newtheorem{example}{Example}%[section]
\newtheorem{lemma}{Lemma}
\newenvironment{proof}{\noindent{\bf Proof:}}{$\Box$\medskip}
\newcommand{\C}{\mathbb{C}}
\newcommand{\calC}{\mathcal{C}}
\newcommand{\g}{\mathfrak{g}}
\newcommand{\G}{\mathcal{G}}
\newcommand{\Ham}{\mathcal{H}}
\newcommand{\I}{\mathcal{I}}
\newcommand{\bk}{{\bf k}}
\newcommand{\bl}{{\bf l}}
\newcommand{\R}{\mathbb{R}}
\newcommand{\calR}{\mathcal{R}}
\newcommand{\W}{\mathcal{W}}
\newcommand{\Z}{\mathbb{Z}}
\newcommand{\uno}{{\, 1\!\! 1\,}}
\newcommand{\sh}{{\,\scriptstyle \sqcup\!\sqcup\,}}

\title{Word series for dynamical systems and their numerical integrators}
\author{A. Murua\footnote{Konputazio Zientziak eta A.\ A.\  Saila, Informatika
 Fakultatea, UPV/EHU, E--20018 Donostia--San Sebasti\'{a}n,  Spain. Email: Ander.Murua@ehu.es}
\  and J.M. Sanz-Serna\footnote{(Corresponding author) Departamento de  Matem\'aticas, Universidad Carlos III de Madrid, Avenida de la Universidad 30, E--28911 Legan\'es (Madrid), Spain.
 Email: jmsanzserna@gmail.com}}
\date{\today}

\begin{document}
\maketitle
\begin{abstract}We study word series and extended word series, classes of formal series for the analysis of some dynamical systems and their discretizations. These series are similar to but more compact than B-series. They may be composed among themselves by means of a simple rule.
While word series have appeared before in the literature,  extended word series are introduced in this paper. We exemplify the use of extended word series by studying the reduction to normal form and averaging of some perturbed integrable problems. We also provide a detailed analysis of the behaviour of splitting numerical methods for those problems.
\end{abstract}

\medskip

\noindent{\bf Keywords and sentences:} Word series, extended word series, B-series, words, Hopf algebras, shuffle algebra, Lie groups, Lie algebras, Hamiltonian problems, integrable problems, normal forms, averaging, splitting algorithms, processing numerical methods, modified systems, resonances.

\medskip

\noindent{\bf Mathematics Subject Classification (2010)} 34C29, 65L05, 70H05, 16T05

\medskip

\noindent Communicated by Christian Lubich
\section{Introduction}

In this paper we study word series and extended word series, classes of formal series of functions for the analysis of some dynamical systems and their discretizations. We exemplify the use of extended word series by studying the reduction to normal form of some perturbed integrable problems. We also provide a detailed analysis of the behaviour of splitting numerical methods for those problems. Word series are patterned after B-series
\cite{HW}, a commonly used tool in the study of numerical integrators; while B-series are parametrized by rooted trees, word series are parametrized by words built from the letters of an alphabet.
Series of {\em differential operators}  parametrized by words (including the Chen-Fliess series) are very
common in control theory \cite{nuevo} and dynamical systems \cite{fm} and have also been used in numerical analysis (see, among others, \cite{ander}, \cite{anderfocm}, \cite{k}). Word series are mathematically equivalent to  series of differential operators, but being series of {\em functions}, they are handled in a way very similar to the way B-series are used by numerical analyst.
Word series, as defined here, have appeared before in the literature, explicitly \cite{orlando}, \cite{part3} or implicitly \cite{part2}.  Extended word series are introduced in this paper.

B-series, introduced by Hairer and Wanner in 1974 \cite{HW}, provided the first example of the application of formal series of functions to the theory of numerical integrators (see \cite{china} for a historical survey). B-series, particularly adapted to Runge-Kutta and related methods, give a convenient, systematic way of performing the nontrivial algebraic manipulations needed to write the expansion of the local error in powers of the stepsize. In addition, they facilitate the construction of integrators found by composing simpler integrators; such a construction is required e.g. when investigating the effective order of Runge-Kutta methods \cite{butcher69}, \cite{bss}. The usefulness of B-series stems from the fact that the composition of two B-series is again a B-series whose coefficients may be written down explicitly and are universal in the sense that they are independent of the particular differential system being integrated. B-series and their extensions grew more important within the notion of  geometric integration \cite{gi}. In 1994 Calvo and one of the present authors \cite{canonical}  showed  how the conditions for a Runge-Kutta scheme to be symplectic may be advantageously derived by examining the corresponding B-series.  Hairer's article \cite{hairer} started the use of B-series to find explicitly modified systems. Since those pioneering contributions the role of B-series and its generalizations \cite{ander} in geometric integration has kept growing as it may be seen in the treatise \cite{hlw}.

The word series and extended word series considered here are, when applicable, more convenient than B-series. A reason for this convenience is that they are more compact; in fact the coefficients of a B-series are parametrized by (possibly coloured or decorated) rooted trees and there are many more
rooted trees with $n$ vertices than words with $n$ letters. A second advantage of word series and extended word series over B-series is that for words the composition rule (see (\ref{eq:convol}) and (\ref{eq:act})) is much simpler than for rooted trees.

An overview of the contributions of this paper is as follows.

Section \ref{sec:words}  gives a summary of the rules to manipulate word series.
A group $\G$ is introduced that plays the role played by the Butcher group in the theory of B-series. The solution of the differential system being integrated and some numerical methods, including splitting algorithms, may be represented by elements of $\G$. We also identify the Lie algebra $\g$ associated with $\G$ and the corresponding bracket. This material  is very much related to the theory of Hopf algebras \cite{anderfocm}, \cite{brouder}; however   Section \ref{sec:words}  has been written with an audience of computational scientist in mind and a number of more algebraic considerations have been postponed  to Section~\ref{s:technical}.

Extended word series are introduced in Section~\ref{sec:extended} to cope with perturbed integrable problems; roughly speaking we treat problems that may be seen as arbitrary perturbations of systems that, in suitable variables, may be cast in the form
$(d/dt)y = 0$, $(d/dt)\theta = \omega$ (in the language of classical mechanics \cite{arnoldmec} $y$/$\theta$ would correspond to action/angle variables). We describe the relevant group $\overline{\G}$ and algebra $\overline{\g}$.

In Section~\ref{s:normal} we show how to use extended word series to bring perturbed integrable problems to {\em normal form}, i.e. how to change variables to reduce the system being analyzed to a form as simple as possible. As distinct from standard ways of finding normal forms, the extended word series approach does not rely on the vector field being polynomial. Furthermore,  the computations required  here are {\em universal} (in the sense of \cite{orlando}): they are independent of the particular system under consideration.

For highly oscillatory problems the reduction to normal form is very much related to the process of {\em averaging} out the oscillatory components of the solution and therefore Section~\ref{s:normal} extends the material in the series of papers \cite{part1}, \cite{part2}, \cite{orlando},  \cite{part3}. Furthermore normal forms readily lead to the explicit computation of (formal) invariant quantities of dynamical systems and their discretizations, an issue not covered in this paper and treated in the follow-up article
\cite{jl}.

Section~\ref{sec:split} is devoted to the study of general splitting algorithms to simulate perturbed integrable problems. We show how our algebraic approach leads to a convenient expansion of the local error. It is well known that  the behaviour of the corresponding global error as $h$ varies is unfortunately extremely complex,  as it depends on arithmetic relations between $h$ and the periods present in the solution. Extended word series provide a powerful instrument to analyze that behaviour. In fact, two different approaches are put forward here. In the first, the integrator is processed, i.e.\  subjected to  changes of variables,  to remove oscillatory components. The second approach is based on constructing a modified system for the integrator and then bringing the modified system to normal form. Of much interest is the fact that the validity of the modified system holds even if $h$ is not small relative to the periods in the problem (cf. the use by Hairer and Lubich of modulated Fourier expansions \cite{hlw}). The techniques  in this section may be readily applied to the  construction and analysis of improved integrators, such as those considered in e.g.\ \cite{molly1}, \cite{molly2}; this will be the subject of future work.

Section~\ref{s:technical} contains proofs and technical material and there is an Appendix devoted to the practical applicability of splitting integrators.

\section{Word series}
\label{sec:words}
This section presents word series and provides a summary of the rules for their application. The presentation has computational scientist in mind and focuses on essential features;  additional details and proofs are given in Section~\ref{ss:technicalwords}, where the approach is more algebraic. Until further notice all functions are assumed to be smooth.

\subsection{Definition of word series}
We consider the \(D\)-dimensional initial-value problem given by
\begin{equation}\label{eq:ic}
x(0) = x_0
\end{equation}
and
\begin{equation}\label{eq:ode}
\frac{d}{dt} x= \sum_{a\in A} \lambda_a(t) f_a(x),
\end{equation}
where $t$ is the (real) independent variable, $A$ is a  finite or infinite countable set of indices and for each $a\in A$, $\lambda_a$ is a scalar-valued function  and $f_a$  a  $D$-vector-valued map.

It is well known that the solutions of (\ref{eq:ode}) may be expanded formally as follows. Associated with each vector field $f_a$ in (\ref{eq:ode}), there is a first-order linear differential operator $E_{a}$: if $g$ is a scalar-valued  function, then the function $E_ag$ is defined by
\begin{equation}\label{eq:revision}
E_a g(x) = \sum_{j=1}^D f^j_a(x)\frac{\partial}{\partial x^j} g(x)
\end{equation}
(superscripts denote components of vectors). We shall also let $E_a$ act on vector-valued mappings; it is then understood that the operator is applied  componentwise. If $x(t)$ satisfies (\ref{eq:ode}), the chain rule yields
\begin{equation*}
\frac{d}{dt} g(x(t)) = \sum_{a\in A} \lambda_a(t) (E_ag)(x(t))
\end{equation*}
or
\begin{equation*}
g(x(t)) = g(x(0)) + \sum_{a\in A} \int_0^t dt_1\, \lambda(t_1) (E_ag)(x(t_1)).
\end{equation*}
The same procedure may be now applied with $(E_ag)(x(t_1))$ in lieu of $g(x(t))$ to rewrite the last equation as
\begin{eqnarray*}
g(x(t)) &=& g(x(0)) + \sum_{a\in A}\int_0^t dt_1\, \lambda(t_1) (E_ag)(x(0))\\&&+ \sum_{a\in A} \sum_{b\in A}\int_0^t dt_1\, \lambda(t_1) \int_0^{t_1}dt_2\,\lambda(t_2)(E_b(E_ag))(x(t_2)).
\end{eqnarray*}
By continuing this Picard iteration and setting $g$ equal to the identity function $g(x)=x$, we find that the solution of (\ref{eq:ic})--(\ref{eq:ode}) has the formal expansion
\begin{equation}\label{eq:expan}
 x(t) = x_0 + \sum_{n=1}^\infty \sum_{a_1,\dots,a_n\in A}\alpha_{a_1\cdots a_n}(t) f_{a_1\cdots a_n}\!(x_0),
\end{equation}
where   the vector-valued mappings
$ f_{a_1\cdots a_n}\!(x)$ and the scalar-valued  functions $\alpha_{a_1\cdots a_n}$ satisfy the recursions
\begin{equation}\label{eq:wbf}
f_{a_1\cdots a_n}\!(x) = \partial_x f_{a_2\cdots a_n}\!(x)\,f_{a_1}\!(x), \quad n >1,
\end{equation}
($\partial_x f_{a_2\cdots a_n}\!(x)$ denotes the value at $x$ of the Jacobian matrix of $f_{a_2\cdots a_n}$)
and
\begin{equation}\label{eq:alfa1}
\alpha_{a_1}\!(t) = \int_0^t \lambda_{a_1}\!(t_1)\,dt_1,
\end{equation}
$$
\alpha_{a_1\cdots a_n}\!(t) = \int_0^t \lambda_{a_n}\!(t_n)\,\alpha_{a_1\cdots a_{n-1}}\!(t_n)\,dt_n, \quad n >1.
$$
For future reference we note that
$$
\alpha_{a_1\cdots a_n}\!(t) = \int_0^t dt_n\,\lambda_{a_n}\!(t_n)\int_0^{t_n}dt_{n-1}\,\lambda_{a_{n-1}}\!(t_{n-1})\cdots \int_0^{t_2} dt_1\,\lambda_{a_1}\!(t_1),
$$
or
\begin{equation}\label{eq:alpha}
\alpha_{a_1\cdots a_n}\!(t) =\int\cdots \int_{{\mathcal S}_n(t)} \lambda_{a_1}\!(t_1)\cdots \lambda_{a_n}\!(t_n)\, dt_1\cdots dt_n,
\end{equation}
where the $n$-fold integral is taken over the simplex
$$
{\mathcal S}_n(t) =
\{ (t_1,\dots,t_n) \in \R^n : 0\leq t_1\leq \cdots \leq t_n \leq t \}.
$$

Let us present some examples (more may be  seen in \cite{anderfocm}):
\begin{enumerate}
\item In the simplest illustration, the set $A$ has only one element $a$ and the corresponding $\lambda$ takes the value $1$ for each $t$. Then (\ref{eq:ode}) is the  {\em autonomous} system $(d/dt) x = f_a(x)$. For each $n$, the inner sum in (\ref{eq:expan}) comprises a single term and from (\ref{eq:alpha}) the corresponding coefficient is found  to be
     $\alpha_{a\cdots a}\!(t)=t^n/n!$. In this case (\ref{eq:expan}) is the standard Taylor expansion of $x(t)$.
\item The {\em autonomous} system $(d/dt) x = F(x)$ with the right-hand side split as $F(x)$ $= f_a(x)+f_b(x)$ is of the form (\ref{eq:ode}) with $A =\{a,b\}$ and
$\lambda_a(t) = \lambda_b(t)= 1$. For each $n$, the inner sum in (\ref{eq:expan}) comprises $2^n$ terms and each of them has a coefficient $t^n/n!$. The expansion (\ref{eq:expan}) is  the Taylor series for $x(t)$ written in terms of the pieces $f_a$ and $f_b$ rather than in terms of $F$, a format that is useful in the analysis of splitting numerical integrators. It is of course possible to split $F$ in $m>2$ parts  or even in infinitely many parts; then $A$ has $m$ or infinitely many elements. In all cases the integral (\ref{eq:alpha}) has the value $t^n/n!$.
\item Let $\omega>0$ be a fixed number. If $A =\Z$ (the set of all integers)
    and, for $k \in \Z$,
    \(
    \lambda_k\!(t) = \exp(ik\omega t),
    \)
    then   (\ref{eq:ode}) is a {\em non-autonomous} system  $2\pi/\omega$-periodic in $t$ with the right-hand side expanded in Fourier series. Here the functions $\lambda_k$ possess complex values; if the system (\ref{eq:ode}) is real then the mappings $f_k$ take values in $\C^D$ and $f_k$ is the complex conjugate of $f_{-k}$ for each $k\in \Z$. The expansion (\ref{eq:expan}) has been used in \cite{part1}, \cite{part2},  \cite{orlando}, \cite{part3} to study analytically periodic problems.
\item Also treated in \cite{part2},  \cite{orlando}, \cite{part3} are quasiperiodic problems. If $\omega\in \Z^d$ is a vector of frequencies, these  are of the form (\ref{eq:ode}) with $A =\Z^d$
    and
\begin{equation}\label{eq:lambda}
    \lambda_\bk\!(t) = \exp(i\bk\cdot\omega t),\qquad
    \bk \in \Z^d.
\end{equation}
This case will be taken up in the next section.
\end{enumerate}

The notation in (\ref{eq:expan}) may be made slightly more compact by considering $A$ as an {\em alphabet}
and the strings $a_1\cdots a_n$ as {\em words} with $n$ letters. Then, if $\W_n$ represents the set of all words with
$n$ letters, (\ref{eq:expan}) reads
\[
 x(t) = x_0 + \sum_{n=1}^\infty \sum_{w\in \W_n}\alpha_w\!(t)\, f_w\!(x_0).
\]

If we furthermore introduce the empty word $\emptyset$ and set $\W_0 = \{\emptyset\}$, $\alpha_\emptyset =1$, $f_\emptyset(x) =x$, then
the last expansion becomes
\begin{equation}\label{eq:expanw}
 x(t) = \sum_{n=0}^\infty \sum_{w\in \W_n}\alpha_w\!(t)\, f_w\!(x_0) = \sum_{w\in \W}\alpha_w\!(t)\, f_w\!(x_0),
\end{equation}
where $\W$ represents the set of all words.

Subsequent developments will make much use of the  set $\C^\W$ of all mappings $\delta:\W\rightarrow \C$; if $\delta \in \C^\W$ and $w$ is a word, then $\delta_w$ is a complex number. This set is obviously a vector space for the usual operations between maps: if $\mu_1,\mu_2$ are scalars and $\delta_1,\delta_2\in\C^\W$, then  $(\mu_1 \delta_1+\mu_2 \delta_2)\in\C^\W$ is defined by $(\mu_1 \delta_1+\mu_2 \delta_2)_w = \mu_1 (\delta_1)_w+\mu_2 (\delta_2)_w$ for each $w\in\W$.

The expansion (\ref{eq:expanw}) motivates the following definition:

\begin{definition} If $\delta\in\C^\W$, then its
corresponding word series is the formal series
\begin{equation}\label{eq:ws}
W_\delta(x) = \sum_{w\in\W} \delta_w f_w\!(x).
\end{equation}
The scalars $\delta_w$ and the functions $f_w$ will be called the {\em coefficients} of the series and {\em word-basis functions} respectively.
\end{definition}

Clearly the word-basis functions  change with the mappings $f_a$ in the system
(\ref{eq:ode}) being studied.
With this terminology, for each fixed $t$, the formal series (\ref{eq:expanw}) for the solution $x(t)$ of (\ref{eq:ic})--(\ref{eq:ode}) is a word series whose coefficients $\alpha_w(t)$ are given by (\ref{eq:alpha}) and $\alpha_\emptyset(t)= 1$ (these coefficients are independent of the mappings $f_a$). Also, for each $t$, the right-hand side of (\ref{eq:ode}) is a word series with  coefficients $\beta_a(t)=\lambda_a(t)$ for words with one letter and $\beta_w(t)=0$ for all other words.
As we shall see later, word series $W_\delta$ corresponding to other choices of coefficients  $\delta_w$ are useful in the analysis of dynamical systems and their numerical integrators.

\begin{remark} Word series and moulds.\ \em In Ecalle's terminology, word series are {\em moulds}, see \cite{ecalle}, \cite{fm}, and the convolution product considered below is the mould product. (Ecalle \cite{ecalle} also considered a composition of moulds ---not discussed in this paper--- that is analogous
to the substitution of B-series \cite{substitution}.)
\end{remark}

\begin{remark} Word series and B-series.\ \em  Each $f_w$, $w\neq \emptyset$, is built up from partial derivatives of the $f_a$, $a\in A$, e.g., if $a,b,c \in A$,
\begin{eqnarray*}
f_{ba}(x) &=& \partial_xf_a(x)\, f_b(x),\\
f_{cba}(x) &=& \partial_xf_{ba}(x)f_c(x) = \partial_{xx} f_a(x)[f_b(x),f_c(x)]+\partial_xf_a(x)\,\partial_xf_b(x)\, f_c(x).
\end{eqnarray*}
The functions $\partial_xf_a(x)\, f_b(x)$, $\partial_{xx} f_a(x)[f_b(x),f_c(x)]$,$\partial_xf_a(x)\,\partial_xf_b(x)\, f_c(x)$ in these expressions are examples of {\em elementary differentials;} each word basis function $f_w(x)$, with $w\in\W_n$, $n >0$, is a linear combination with integer coefficients of elementary differentials of order $n$ (i.e.\ containing $n$ functions $f_a(x)$). There is an elementary differential corresponding to each $A$-coloured (or $A$-decorated) rooted tree, i.e.\ to each rooted tree where to each vertex it has been assigned  an element of $A$. By expanding each word basis function in terms of elementary differentials, the series (\ref{eq:ws}) becomes a so-called B-series
$$
\sum_{\tau} \Delta_\tau {\mathcal F}_\tau(x),
$$
where the summation is extended to all $A$-coloured  rooted trees and ${\mathcal F}_\tau(x)$ is the elementary differential corresponding to $\tau$.  B-series were introduced by Hairer and Wanner \cite{HW} in the simplest case where the alphabet $A$ has only one letter. B-series corresponding to this and larger alphabets are often used in numerical analysis;  word series being more compact are better suited to analyze some integrators. For  the relation between  word series and B-series   see \cite{anderfocm} and \cite{fm} (Ecalle used in this connection the terminology arborifaction-coarborification).
\end{remark}
\begin{remark}
\label{rem:eps}
 Word series as power series. \em For a system $(d/dt) x = \epsilon \sum_a \lambda_a(t) f_a(x)$, where $\epsilon$ is a scalar parameter, (\ref{eq:ws}) becomes the formal power series
$$
\sum_{n=0}^\infty \epsilon^n \!\sum_{w\in\W_n} \gamma_w f_w(x).
$$
Note that when the alphabet $A$ is infinite the coefficient of $\epsilon^n$ is itself an infinite series that has to be understood formally. The format (\ref{eq:ws}) is of course recovered from the power series by setting $\epsilon=1$; therefore both formats are equivalent.
While the papers \cite{part2}, \cite{part3} use the power series format, we prefer to work  with   (\ref{eq:ws}) as it leads to more compact formulae. Some readers may find it useful to mentally substitute $\epsilon f_a$ for $f_a$ everywhere in what follows; this may be particularly the case
for the perturbed integrable problems considered in Section~\ref{sec:extended}.
\end{remark}

\subsection{Operations with word series}

\subsubsection{The convolution product}

Given $\delta,\delta^\prime\in\C^\W$, we  associate with them its {\em convolution product} $\delta\star\delta^\prime\in\C^\W$ defined by
\begin{equation}\label{eq:convol}
(\delta\star\delta^\prime)_{a_1\cdots a_n} = \delta_\emptyset\delta^\prime_{a_1\cdots a_n}
+ \sum_{j=1}^{n-1} \delta_{a_1\cdots a_j}\delta^\prime_{a_{j+1}\cdots a_n}
+\delta_{a_1\cdots a_n}\delta^\prime_\emptyset
\end{equation}
(here it is understood that $(\delta\star\delta^\prime)_\emptyset = \delta_\emptyset\delta^\prime_\emptyset$). The convolution product is not commutative, but it is associative and has a unit (the element $\uno\in \C^\W$ with $\uno_\emptyset = 1$ and $\uno_w = 0$ for $w\neq \emptyset$).

As we shall see, the operation $\star$ plays an essential role in the manipulation of word series.

\subsubsection{The  group $\G$}
If $w\in\W_m$ and $w^\prime\in \W_n$ are words, $m,n\geq 1$, their {\em shuffle product} $w\sh w^\prime$ \cite{reu} is the formal sum of the $(m+n)!/(m!n!)$
words with $m+n$ letters that may be obtained by interleaving the letters of $w$ and $w^\prime$ while preserving the order in which the letters appear in each word. (Examples: for the words $ab$, $cd$, the shuffle product is
$ab \sh cd = abcd+acbd+cabd+acdb+cadb+cdab$, for the words $ab$, $a$, the product is $ab\sh a = aba+aab+aab$.)
In addition $\emptyset \sh w = w  \sh \emptyset= w$ for each $w\in\W$. The operation $\sh$ is commutative and associative and has word $\emptyset$ as a unit.

We denote by $\G$ the set of those $\gamma \in \C^\W$  that satisfy the so-called {\em shuffle relations:} $\gamma_\emptyset = 1$ and, for each $w,w^\prime\in \W$,
\begin{equation}\label{eq:defgr}
\gamma_w\gamma_{w^\prime} = \sum_{j=1}^N \gamma_{w_j}\qquad \mbox{\rm if}\qquad w\sh w^\prime = \sum_{j=1}^N w_j.
\end{equation}
The set $\G$ with the operation $\star$ may be regarded in a formal sense (cf. \cite{geir}) as a non-commutat\-ive  Lie group (see  Section~\ref{sss:shuffle}). For each fixed $t$, the family of coefficients defined by (\ref{eq:alpha}) and $\alpha_\emptyset(t) =1$ is an element of the group $\G$ (to prove this, consider (\ref{eq:defgr}) for $w\in\W_m$ and $w^\prime \in \W_n$ and use (\ref{eq:alpha}) to write $\alpha_w\alpha_{w^\prime}$ and each $\alpha_{w_j}$ as integrals over subsets of $\R^{m+n}$, cf. \cite[Corollary 3.5]{reu}).

When $\gamma$ belongs to $\G$, the word series $W_\gamma(x)$ has properties that are not shared by general word series. For $\gamma \in \G$, changes of variables $x = C(X)$ commute with the formation of word series as described in \cite[Proposition 3.1]{part2}.
Moreover, for $\gamma\in\G$, $W_\gamma(x)$ may be substituted in an arbitrary word series $W_\delta(x)$, $\delta\in\C^\W$, to get a new word series; more precisely
\begin{equation}\label{eq:act}
W_\delta\big (W_{\gamma}(x)\big) = W_{\gamma\star \delta}(x),
\end{equation}
i.e.\ the coefficients of the word series resulting from the substitution are given by the convolution product $\gamma\star\delta$ (this is proved in Section~\ref{sss:action}). A similar rule exists of course for B-series, but the recipe there is more complicated than (\ref{eq:convol}) \cite{HW}, \cite[Chapter III]{hlw}.

In the numerical analysis of differential equations, word series with coefficients in $\G$ appear e.g.\ as expansions in powers of the stepsize of splitting integrators, see Section~\ref{sec:split}. Then (\ref{eq:act}) provides the recipe to compose integrators or to compose an integrator and a mapping. For each fixed $t$, the right-hand side of (\ref{eq:ode}) is an example of a word-series with coefficients in the Lie algebra $\g$ of the group $\G$ that we study next.

\subsubsection{The Lie algebra $\g$}

We denote $\g$ the set of elements $\beta\in\C^\W$ such that $\beta_\emptyset = 0$ and for each pair of nonempty words $w,w^\prime$,
\[
\sum_{j=1}^N \beta_{w_j} = 0\qquad \mbox{\rm if}\qquad w\sh w^\prime = \sum_{j=1}^N w_j.
\]
It is clear that $\g$ is a vector subspace of the vector space $\C^\W$; furthermore $\g$ is closed for the skew-symmetric product defined by
\begin{equation}\label{eq:bracket}
[\beta,\beta^\prime] = \beta\star\beta^\prime-\beta^\prime\star\beta.
\end{equation}
This product satisfies the Jacobi identity and therefore endows $\g$ with a structure of Lie algebra (see Section~\ref{sss:shuffle}). In fact $\g$ is the Lie algebra of the Lie group $\G$: the elements $\beta\in\g$ coincide with the velocities at $\uno\in \G$ of curves in $\G$. In symbols, if
$\gamma(t)$, $t\in\R$ is a curve in $\G$ such that $\gamma(0) = \uno$, then $\beta\in\C^\W$ defined by
\begin{equation}\label{eq:derivada}
\beta = \left.\frac{d}{dt} \gamma(t)\right|_{t=0}
\end{equation}
(i.e.\ $\beta_w= (d/dt) \gamma_w(0)$ for
for each $w\in\W$) belongs to $\g$. Moreover any $\beta\in\g$ arises in this way: the exponential
\begin{equation}\label{eq:exp}
\exp_\star(t\beta) = \uno +\sum_{j=1}^\infty \frac{t^j}{j!} \beta^{\star j}
\end{equation}
($\beta^{\star j}$ is the convolution product of $j$ factors all equal to $\beta$) defines a curve of elements of $\G$ with velocity $\beta$ at $t=0$. The points of this curve actually form a one-parameter subgroup of $\G$ since $\exp_\star(t\beta)\star \exp_\star(t^\prime\beta) = \exp_\star((t+t^\prime)\beta)$.
The exponent $\beta$ may be retrieved from the exponential $\gamma = \exp_\star(\beta)$ by means of the logarithm
\begin{equation}\label{eq:log}
\beta = \log_\star(\gamma) = \sum_{j=1}^\infty \frac{(-1)^{(j+1)}}{j} (\gamma-\uno)^{\star j}.
\end{equation}

Just as the convolution product with an element of $\G$ corresponds to the operation of substitution of the associated word series (see (\ref{eq:act})), the convolution bracket (\ref{eq:bracket}) corresponds to the Jacobi bracket (commutator) of the associated word series, for $\beta,\beta^\prime\in \g$:
\[
\big(\partial_x W_{\beta^\prime}(x)\big) W_\beta(x) - \big(\partial_x W_\beta(x)\big) W_{\beta^\prime}(x)= W_{[\beta,\beta^\prime]}(x).
\]
To prove this, let $\gamma(t)$ be a curve with velocity $\beta$ as above; then, for any $\delta\in\C^\W$,
\begin{equation}\label{eq:calculote}
(\partial_x W_{\delta}(x))W_\beta(x)
=  \left.\frac{d}{dt} W_{\delta}\big(W_{\gamma(t)}(x)\big)\right|_{t=0}
=\left.\frac{d}{dt} W_{\gamma(t)\star\delta}(x)\right|_{t=0}
= W_{\beta\star\delta}(x).
\end{equation}
(We have successively used  the chain rule, (\ref{eq:act}), and the bilinearity of $\star$.)

Since for $\beta\in\g$, the word series $W_\beta(x)$ belongs to the Lie algebra (for the Jacobi bracket) generated by the mappings (vector fields) $f_a$, the Dynkin-Specht-Wever formula \cite{jac} may be used to rewrite the word series in terms of iterated commutators of these mappings:
\begin{equation}\label{eq:dynkin}
W_\beta(x)
 = \sum_{n=1}^\infty \frac{1}{n} \sum_{a_1,\dots,a_n\in A}
 \beta_{a_1\cdots a_n} [[\cdots[[f_{a_1},f_{a_2}],f_{a_3}]\cdots],f_{a_n}](x).
\end{equation}
(For $n=1$ the terms in the inner sum are of the form $\beta_{a_1} f_{a_1}(x)$.)

\subsubsection{Nonautonomous differential equations in $\G$}
Initial value problems
\begin{equation}\label{eq:ode3}
\frac{d}{dt} x(t) = W_{\beta(t)}(x(t)),\qquad x(0) =x_0,
\end{equation}
where for each $t$, $\beta(t)\in\g$, are a natural generalization of (\ref{eq:ic})--(\ref{eq:ode}) (we recall that the right-hand side (\ref{eq:ode})  does not include contributions from basis functions associated with words with more than one letter). These problems may be solved formally by using the ansatz $x(t) = W_{\alpha(t)}(x_0)$ with $\alpha(t)\in\G$ for each $t$. In view of (\ref{eq:act}), we may write
$$
\frac{d}{dt}W_{\alpha(t)}(x_0) = W_{\beta(t)}(W_{\alpha(t)}(x_0)) = W_{\alpha(t)\star\beta(t)}(x_0),\qquad W_{\alpha(0)}(x_0) = x_0,
$$
which leads to the linear, nonautonomous initial value problem
\begin{equation}\label{eq:odebeta}
\frac{d}{dt} \alpha(t) = \alpha(t)\star\beta(t),\qquad \alpha(0) = \uno.
\end{equation}
For the empty word, according to (\ref{eq:convol}), $(d/dt) \alpha_\emptyset = \alpha_\emptyset(t) \beta_\emptyset(t)$; since $\beta(t)\in\g$ implies $\beta_\emptyset(t) =0$, we see that $\alpha_\emptyset(t) =1$.
For a word $a\in\W_1$, $(d/dt)\alpha_a(t) = \alpha_\emptyset(t)\beta_a(t) + \alpha_a(t)\beta_\emptyset(t)$, which leads to $\alpha_a(t) = \int_0^t \beta_a(t_1)\, dt_1$. The process may be continued in an obvious way and
induction on the number of letters shows that (\ref{eq:odebeta}) uniquely determines $\alpha_w(t)$ for each $w\in\W$. Furthermore, for each $t$, the element $\alpha(t)\in\C^\W$ defined in this way belongs to $\G$; while this may be established by means of the Magnus expansion (see e.g.\ \cite{magnus}, \cite[Chapter IV]{hlw}) we  provide an elementary proof in Section~\ref{s:technical}.

Conversely, any   curve $\alpha(t)$ of group elements with $ \alpha(0) = \uno$ solves a problem of the form (\ref{eq:odebeta}) with
$$
\beta(t) = \alpha(t)^{-1} \star \left(\frac{d}{dt} \alpha(t)\right).
$$
Since
$$
\alpha(t)^{-1} \star \left(\frac{d}{dt} \alpha(t)\right) = \left. \frac{d}{ds} \Big(\alpha(t)^{-1} \star \alpha(t+s)\Big)\right|_{s=0},
$$
for each $t$, the element $\beta(t)$ defined in this way is a member of $\g$.

The investigation of normal forms below is based on changing variables.
A change of variables $x = W_{\kappa}(X)$, $\kappa\in\G$, transforms the problem (\ref{eq:ode3}) into
$$
\frac{d}{dt} X(t) = W_{B(t)}(X(t)), \qquad X(0) = X_0,
$$
with $B(t)\star \kappa = \kappa \star\beta(t)$ (or $B(t) = \kappa \star\beta(t)\star\kappa^{-1}$), $X_0 = W_{\kappa^{-1}}(x_0)$ ($\kappa^{-1}$ is the inverse of $\kappa$ in the group $\G$); this is a direct consequence of (\ref{eq:act}) and (\ref{eq:calculote}).

\subsubsection{The Hamiltonian case}
Consider now the particular case where the dimension $D$ of (\ref{eq:ode}) is even and each $f_a(x)$ is a Hamiltonian vector field \cite{ssc}, i.e.\ $f_a(x) = J^{-1}\nabla H_a(x)$, where $J^{-1}$ is the standard symplectic matrix. Recall \cite{arnoldmec} that the Jacobi bracket (commutator) $[J^{-1}\nabla A, J^{-1}\nabla B] $ of two Hamiltonian vectors fields is again a Hamiltonian vector field and that the corresponding Hamiltonian function is
the Poisson bracket of the Hamiltonians $A$ and $B$, defined by $\{A,B\}(x) = \nabla A(x)^T J^{-1} B(x)$. According to (\ref{eq:dynkin}), for each $\beta\in\g$, the vector field $W_\beta(x)$ is Hamiltonian
$$
W_\beta(x) = J^{-1}\nabla\Ham_\beta(x)
$$
with Hamiltonian function
$$
\Ham_\beta(x) = \sum_{w\in\W,\, w\neq\emptyset} \beta_w H_w(x),
$$
where, for each nonempty word $w=a_1\cdots a_n$,
\begin{equation}\label{eq:wordham}
H_w(x) = \frac{1}{n}\{\{\cdots\{\{H_{a_1},H_{a_2}\},H_{a_3}\}\cdots\},H_{a_n}\}(x)
.
\end{equation}

For Hamiltonian systems, changes of variables $x = W_{\kappa}(X)$, $\kappa\in\G$, are canonically symplectic; after the change of variables the system is again Hamiltonian and the new Hamiltonian function is obtained by changing variables in the old Hamiltonian function \cite{arnoldmec}.

\section{Extended word series}
\label{sec:extended}

In this section we adapt the preceding material to cover perturbed integrable problems.

\subsection{Perturbed integrable problems}

We now consider systems of the form
$$
\frac{d}{dt} \left[ \begin{matrix}y\\ \theta\end{matrix}\right]
= \left[ \begin{matrix}0\\ \omega\end{matrix}\right]
+f(y,\theta),
$$
where $y\in\R^{D-d}$, $0<d\leq D$, $\omega\in\R^d$ is a vector of frequencies $\omega_j>0$, $j = 1,\dots, d$, and $\theta$ comprises $d$ angles, so that $f(y,\theta)$ is $2\pi$-periodic  in each component of $\theta$ with Fourier expansion
%\begin{equation}\label{eq:fourier}
$$
f(y,\theta) = \sum_{\bk \in\Z^d} \exp(i \bk\cdot \theta)\: \hat f_\bk(y)
$$
%\end{equation}
($\hat f_\bk(y)$ and $\hat f_{-\bk}(y)$ are mutually conjugate, so as to have a real problem). Systems of this form appear in many applications, perhaps after a change of variables (see the Appendix). When $f\equiv 0$ the system is integrable (the angles rotate with uniform angular velocity and $y$ remains constant) and accordingly we refer to problems of this class as perturbed integrable problems and to $f$ as the perturbation (some readers may prefer to substitute $\epsilon f$ for $f$, see Remark~\ref{rem:eps}).

After introducing the  functions
\begin{equation}\label{eq:fbk}
f_\bk(y,\theta) = \exp(i\bk\cdot \theta)\: \hat f_\bk(y),\qquad y\in\R^{D-d},\: \theta\in\R^d,
\end{equation}
that satisfy the fundamental identity
\begin{equation}\label{eq:fund}
f_\bk(y,\theta_1+\theta_2) = \exp(i \bk\cdot\theta_1) \: f_\bk(y,\theta_2),
\end{equation}
the system takes the form
\begin{equation}\label{eq:ode2}
\frac{d}{dt} \left[ \begin{matrix}y\\ \theta\end{matrix}\right]
= \left[ \begin{matrix}0\\ \omega\end{matrix}\right]
+f(y,\theta)
= \left[ \begin{matrix}0\\ \omega\end{matrix}\right]
+\sum_{\bk \in\Z^d} f_\bk(y,\theta).
\end{equation}
To find the solution with initial conditions
\begin{equation}\label{eq:ic2}
y(0) = y_0,\qquad \theta(0) = \theta_0,
\end{equation}
we perform the time-dependent change of variables $\theta =\eta+t\omega $ to get

\begin{equation}\label{eq:odeeta}
\frac{d}{dt} \left[ \begin{matrix}y\\ \eta\end{matrix}\right]
= \sum_{\bk \in\Z^d} \exp(i \bk\cdot \omega t)\:f_\bk(y,\eta),
\end{equation}
a particular instance of the problem considered in Section \ref{sec:words}. The alphabet $A$ coincides with $\Z^d$,
and, for each \lq letter\rq\ $\bk$, $\lambda_\bk(t)$  is given by (\ref{eq:lambda}). The formula (\ref{eq:expan}) yields
\begin{equation}\label{eq:solucionprev}
\left[ \begin{matrix}y(t)\\ \eta(t)\end{matrix}\right]
=
\left[ \begin{matrix}y(0)\\ \eta(0)\end{matrix}\right]
+\sum_{n=1}^\infty \sum_{\bk_1,\dots,\bk_n}\alpha_{\bk_1\cdots \bk_n}(t)\, f_{\bk_1\cdots \bk_n}(y(0),\eta(0)),
\end{equation}
where the coefficients $\alpha$ are still given by (\ref{eq:alpha}) (but recall that now the letters
$a$ are multiindices $\bk$) and the word basis functions are defined by (\ref{eq:fbk}) and (\ref{eq:wbf}) (the Jacobian in (\ref{eq:wbf}) is taken with respect to the $D$-dimensional variable $(y,\theta)$). We conclude that, in the original variables, the solution flow of (\ref{eq:ode2}), has the formal expansion
\begin{equation}\label{eq:solucion}
\phi_t(y_0,\theta_0) =
\left[ \begin{matrix}y(t)\\ \theta(t)\end{matrix}\right] =
\left[ \begin{matrix}y_0\\ \theta_0\end{matrix}\right]
+\left[ \begin{matrix}0\\t \omega \end{matrix}\right]
+\sum_{n=1}^\infty \:\sum_{\bk_1,\dots,\bk_n}\alpha_{\bk_1\cdots \bk_n}(t)\, f_{\bk_1\cdots \bk_n}(y_0,\theta_0).
\end{equation}
Note that the word basis functions are {\em independent of the frequencies} $\omega$ and the coefficients $\alpha$ are {\em independent of $f$}. Also from
(\ref{eq:fund}) we have the identity:
\begin{equation}\label{eq:fund2}
f_{\bk_1\cdots \bk_n}(y,\theta_1+\theta_2) = \exp(i (\bk_1+\cdots+\bk_n)\cdot \theta_1)\: f_{\bk_1\cdots \bk_n}(y,\theta_2),
\end{equation}
and, in particular
\begin{equation}\label{eq:fund2bis}
f_{\bk_1\cdots \bk_n}(y,\theta) = \exp(i (\bk_1+\cdots+\bk_n)\cdot \theta)\: f_{\bk_1\cdots \bk_n}(y,0),
\end{equation}

With the notation of Section \ref{sec:words}, we write (\ref{eq:solucion}) in the following form (here and later $x = (y,\theta)$):
$$
x(t) = \left[ \begin{matrix}0\\t \omega \end{matrix}\right] +W_{\alpha(t)}(x_0).
$$
 In order to make the formula even more compact, we introduce the vector space  $\calC = \C^d\oplus \C^\W$
and define:
\begin{definition} If $(v,\delta)\in\calC$, then its
corresponding extended word series is the formal series

\[
\overline{W}_{(v,\delta)}(x) =  \left[ \begin{matrix}0\\ v \end{matrix}\right] +\sum_{w\in\W} \delta_w f_w\!(x).
\]
\end{definition}
Then the solution (\ref{eq:solucion}) of (\ref{eq:ode2})--(\ref{eq:ic2}) has the expansion
$$x(t) = \overline{W}_{(t\omega,\alpha(t))}(x_0),\qquad (t\omega,\alpha(t)) \in \calC,$$
with  $\alpha(t)\in \G \subset \C^\W$  as defined in Section~\ref{sec:words}. Also the right-hand side of (\ref{eq:ode2}) is an extended word series $\overline{W}_{(\omega,\beta)}(x)$ with $\beta\in\g\subset \C^\W$ defined by
\begin{equation}\label{eq:b}
\beta_w = 1 \quad \mbox{if}\quad w\in\W_1,\qquad \beta_w = 0 \quad\mbox{if}\quad w\notin \W_1.
\end{equation}

\subsection{Operations with extended word series}

\subsubsection{The operation $\bigstar$}

The following two linear operators will appear repeatedly.
If $v$ is a $d$-vector,
 $\Xi_{v}$ is the linear operator in $\C^\W$ that maps each $\delta \in \C^\W$ into the element of $\C^\W$ defined by
 $(\Xi_{v} \delta)_\emptyset = \delta_\emptyset$ and
\begin{equation}\label{eq:Xi}
(\Xi_{v}\delta)_w =\exp( i (\bk_1+\cdots+\bk_n)\cdot v)\: \delta_w.
\end{equation}
 for $w = \bk_1\dots\bk_n$.
The linear operator $\xi_{v}$ on $\C^{\W}$ is defined as follows:
 $(\xi_{v} \delta)_{\emptyset}=0$, and for each word $w=\bk_1 \cdots \bk_n$,
\begin{equation*}
(\xi_{v} \delta)_{w} = i  (\bk_1 +\cdots + \bk_n)\cdot v\: \delta_{w}.
\end{equation*}
Thus $\Xi_{v}$ and $\xi_{v}$ are  {\em diagonal} operators with eigenvalues $\exp( i (\bk_1+\cdots+\bk_n)\cdot v)$ and
 $i  (\bk_1 +\cdots + \bk_n)\cdot v$ respectively.
Observe that $\Xi_{v}(\gamma \star \delta) = (\Xi_{v} \gamma) \star (\Xi_{v} \delta)$ if $\gamma, \delta\in \C^{\W}$ and that:
\begin{equation*}
 \frac{d}{dt} \Xi_{t v} =
 \Xi_{t v} \xi_{v} = \xi_{v}\Xi_{t v}.
\end{equation*}

The symbol $\overline{\G}$ denotes the subset of $\calC$ comprising the elements $(u,\gamma)$ with $u \in \C^d$ and $\gamma\in\G$. For each $t$, the
solution coefficients $(t\omega,\alpha(t))\in\calC$ found above provide an example of element of $\overline{\G}$.
With the help of $\Xi_u$ we define an operation $\bigstar$ as follows.
 If $(u,\gamma)\in\overline{\G}$ and $(v,\delta) \in\calC$, then
 $$
(u,\gamma) \bigstar(v,\delta) = (\gamma_\emptyset v + \delta_{\emptyset} u,
\gamma \star (\Xi_{u} \delta))\in\calC.
$$
By using (\ref{eq:act}) and (\ref{eq:fund2}), it is a simple exercise to check that  $\overline{\G}$ acts by substitution on extended word series as follows:
\begin{equation}\label{eq:act2}
\overline{W}_{(v,\delta)}\big(\overline{W}_{(u,\gamma)}(x)\big) =
\overline{W}_{(u,\gamma)\bigstar (v,\delta)}(x), \qquad \gamma\in{\G}.
\end{equation}
In fact we have defined the operation $\bigstar$  so as to ensure this property.
The set $\overline{\G}$ is a group for the product $\bigstar$ and $\C^d$ and $\G$ may be viewed as subgroups of $\overline{\G}$.\footnote{
Consider the group homomorphism  from the additive group $\C^d$ to the group of automorphisms of $\G$ that maps each $\mu \in \C^d$ into $\Xi_{\mu}$. Then $\overline{\G}$ is the (outer) semidirect product of $\G$ and the additive group $\C^d$ with respect to this homomorphism.}   The unit of $\overline{\G}$ is the element $\overline\uno = (0,\uno)$.

\subsubsection{The Lie algebra $\overline{\g}$}
\label{sss:tlag}

As a set, the Lie algebra $\overline{\g}$ of the group $\overline{\G}$ consists of the elements
$(v,\delta)\in\calC$ with $\delta\in\g$. Let us describe the bracket in $\overline{\g}$.
Given $v \in \C^d$ and $\delta \in \g$, we have the trivial decomposition
\begin{equation*}
\overline{W}_{(v,\delta)} = \left[
  \begin{array}{c}
    0 \\ v
  \end{array}
\right] + W_{\delta}(x) =
\overline{W}_{(v,0)}+\overline{W}_{(0,\delta)}.
\end{equation*}
By using (\ref{eq:fund2bis}), one can check that the Jacobi bracket of the vector fields $\overline{W}_{(\lambda,0)}$ and
$\overline{W}_{(0,\delta)}$ is
\begin{equation*}
[\overline{W}_{(v,0)},\overline{W}_{(0,\delta)}] = \overline{W}_{(0,\xi_{v} \delta)}.
\end{equation*}
From these relations we conclude that, for arbitrary  $(v,\delta),(u,\eta) \in \C^d\oplus\g$, the Jacobi bracket of the vector fields $\overline{W}_{(v,\delta)}$, $\overline{W}_{(u,\eta)}$ is given by
\begin{equation*}
[\overline{W}_{(v,\delta)},\overline{W}_{(u,\eta)}] = \overline{W}_{(0,\xi_{v}\eta-\xi_{u} \delta + \delta\star\eta -\eta\star\delta )}.
%\overline{W}_{[(v,\delta),(u,\eta))]}(x),
\end{equation*}
Accordingly the bracket of $\overline{\g}$ has the expression
\begin{equation*}
[(v,\delta),(u,\eta))] =
(0,\xi_{v}\eta-\xi_{u} \delta + \delta\star\eta -\eta\star\delta).
\end{equation*}
The $0$  reflects  the fact that $\C^d$ is an Abelian subgroup of $\overline{\G}$.

\subsubsection{Nonautonomous differential equations in $\overline{\G}$}
\label{sss:eqnsoverlineG}
The initial value problem
$$
\frac{d}{dt} x(t) = \overline{W}_{(\omega,\beta(t))}(x(t)),\qquad x(0) =x_0,
$$
where $(\omega,\beta(t))\in\overline{\g}$ for each $t$, may be formally solved in a manner that is exactly parallel to treatment given above to
(\ref{eq:ode3}): $x(t)=\overline{W}_{(t\omega,\alpha(t))}(x_0)$, where $\alpha(0) = \uno$ and
\begin{equation*}
\frac{d}{dt} (t\omega,\alpha(t)) = (t\omega,\alpha(t)) \bigstar (\omega,\beta(t)).
\end{equation*}
Observe that the right-hand side of this equation is, by definition of $\bigstar$, equal to $(t\omega, \alpha(t) \star (\Xi_{t\omega} \beta(t)))$, so that $\alpha(t)$ is the solution of an initial value problem of the form (\ref{eq:odebeta}) with $\beta(t)$ replaced by $\Xi_{t\omega} \beta(t)$.

A change of variables
$x = \overline{W}_{(v,\kappa)}(X)$, $(v,\kappa)\in\overline{\G}$, can be seen to transform the  problem  into
$$
\frac{d}{dt} X(t) =\overline{ W}_{(\omega,B(t))}(X(t)), \qquad X(0) = X_0,
$$
where now $B(t)$ is determined from
\begin{equation*}
B(t)\star \kappa + \xi_{\omega}\kappa= \kappa \star (\Xi_{v} \beta(t) )
\end{equation*}
and, of course, $x_0 =\overline{W}_{(v,\kappa)}(X_0)$ or $X_0 =\overline{W}_{(v,\kappa)^{-1}}(x_0)$. (Note that $(v,\kappa)^{-1} = (-v,\Xi_{-v}\kappa^{-1})$.)

\subsubsection{Perturbed Hamiltonian problems}
To end this section, assume  in (\ref{eq:ode2}), that the dimension $D$ is even with  $D/2- d=m\geq0$ and that the vector of unknowns takes the form
$$
x = (y,\theta) =(p^1,\dots,p^m; q^1,\dots,q^m;a^1,\dots,a^d;\theta^1,\dots,\theta^d),
$$
where $p^j$ is the momentum canonically conjugate to the co-ordinate $q^j$ and $a^j$ is the momentum (action) canonically conjugate to the co-ordinate (angle) $\theta^j$. If each $f_\bk(x)$ in (\ref{eq:fbk}) is a Hamiltonian vector field with Hamiltonian function $H_\bk(x)$, then the system (\ref{eq:ode2}) is itself Hamiltonian for the Hamiltonian function
$$
\sum_{j=1}^d \omega_j a^j +\sum_{\bk\in\Z^d} H_\bk(x).
$$

For each $(\omega,\beta)\in\overline{\g}$, the extended word series $\overline{W}_{(\omega,\beta)}(x)$ is a  Hamiltonian formal vector field, with Hamiltonian function
\begin{equation}
\label{eq:Hamseries}
\sum_{j=1}^d \omega_j a^j + \sum_{w\in\W,\,w\neq\emptyset}\beta_w H_w,
\end{equation}
with $H_w(x)$ as in (\ref{eq:wordham}).
 Note that the Lie bracket in $\overline{\g}$ can be used to compute the Poisson bracket of formal Hamiltonian functions of the form (\ref{eq:Hamseries}).

\section{Normal forms and averaging}
\label{s:normal}

In this section we show how the algebraic machinery introduced above may be applied to build a theory of normal forms \cite{arnoldode}, \cite{aver}
for the perturbed integrable problems of the form (\ref{eq:ode2}). This theory hinges on the fact that the linear operator
$\overline{W}_{(0,{\delta})} \mapsto [\overline{W}_{(\omega,0)}, \overline{W}_{(0,{\delta})}]$ ($\delta\in\g$) coincides, as we have seen in Section~\ref{sss:tlag}, with the {\em  diagonal} operator $\overline{W}_{(0,{\delta})} \mapsto \overline{W}_{(0,{\xi_\omega \delta})}$.

Let us consider  an autonomous system
\begin{equation}\label{eq:systnormal}
\frac{d}{dt}x =
\frac{d}{dt} \left[ \begin{matrix}y\\ \theta\end{matrix}\right]
= \left[ \begin{matrix}0\\ \omega\end{matrix}\right]
+
 W_{\beta}(x) = \overline{W}_{(\omega, \beta)}(x),\qquad \beta \in \g.
\end{equation}
As noted before, this format yields the
perturbed problem (\ref{eq:ode2}) when $\beta$ is chosen as in (\ref{eq:b}). The more general case where $\beta$ is any element in $\g$ will be necessary to deal with splitting integrators later.
We shall change variables $x = W_\kappa(X) = \overline{W}_{(0,\kappa)}(X)$, $\kappa\in\G$, in order to simplify
(\ref{eq:systnormal}) as much as possible.

\begin{remark}\label{rem:generalized}\em There is nothing lost by assuming that $x= W_\kappa(X)$ is a (not extended) word series in the new variables $X$ ---or equivalently an extended word series of the special format $x= \overline{W}_{(0,\kappa)}(X)$---. More general  changes $x=\overline{W}_{(v,\kappa)}(X)$ do not allow for additional simplications in (\ref{eq:systnormal}).
\end{remark}

From Section~\ref{sss:eqnsoverlineG}, we know that
the transformed system is
\begin{equation}\label{eq:systnormaltrans}
\frac{d}{dt}X =
\frac{d}{dt} \left[ \begin{matrix}Y\\ \Theta\end{matrix}\right]
= \left[ \begin{matrix}0\\ \omega\end{matrix}\right]
+
 W_{\widehat{\beta}}(X) = \overline{W}_{(\omega, \widehat{\beta})}(X),
\end{equation}
with
\begin{equation}\label{eq:normaleqn}
\xi_\omega \kappa +\widehat{\beta} \star \kappa = \kappa \star\beta.
\end{equation}

Our aim is to  choose  $\widehat{\beta}\in\g$ and $\kappa\in \G$ subject to (\ref{eq:normaleqn}) and such that
$\widehat{\beta}$ is as simple as possible; then the system is said to have been brought to normal form. Of course the maximum simplification would be obtained by setting $\widehat{\beta} = 0$, but for this choice of $\widehat{\beta}$ it is not possible to find an appropriate
$\kappa$; this will be clear in the proof of Theorem~\ref{th:theoremnormal} and is to be expected from general results on normal forms \cite{arnoldode}, \cite{aver}. More precisely, perturbations that commute with $\overline{W}_{(\omega,0)}$  cannot be eliminated by changing variables. One then has to restrict the attention to $\widehat{\beta}\in\g$ such that in (\ref{eq:systnormaltrans}) the unperturbed vector field and the perturbation commute, i.e.\ $[\overline{W}_{(\omega,0)}, \overline{W}_{(0,\widehat{\beta})}]=0$. This is equivalent to $\overline{W}_{(0,\xi_{\omega} \widehat{\beta})}=0$, or, in terms of the coefficients,
\begin{equation}\label{eq:condnormal}
i (\bk_1+\cdots+\bk_n)\cdot \omega\:\widehat{\beta}_{\bk_1\dots\bk_n} = 0,
\end{equation}
for each nonempty word $\bk_1\dots\bk_n$. We have then the following result, which is proved constructively in Section~\ref{ss:proofnormal}.

\begin{theorem}
\label{th:theoremnormal}
There is a change of variables $x = W_\kappa(X)$, $\kappa\in\G$, that reduces the system (\ref{eq:systnormal})
to the form (\ref{eq:systnormaltrans}), where $\widehat{\beta}\in\g$ and $ \widehat{\beta}_w = 0$ for all words $w=\bk_1\dots\bk_n$ such that $(\bk_1+\cdots+\bk_n)\cdot\omega \neq 0$. Furthermore the vector fields
$\overline{W}_{(\omega,0)}(X)$ and $\overline{W}_{(0, \widehat \beta)}(X)$ commute  and the solutions of (\ref{eq:systnormaltrans}) satisfy
\[
X(t) = \phi_t \Big( X(0)+
\left[ \begin{matrix}0\\ t\omega\end{matrix}\right]
\Big)= \phi_t( X(0))+\left[ \begin{matrix}0\\ t\omega\end{matrix}\right],
\]
where $\phi_t$ is the solution flow of the system $(d/dt)X = W_{\widehat{\beta}}(X)$. Equivalently,
$X(t) = \overline{W}_{(t\omega,\widehat{\alpha}(t))}(X(0))$, where
\[
(t \omega,\widehat{\alpha}(t)) = (t\omega, \exp_\star(t\widehat{\beta})) = \exp_\star(t\widehat{\beta}) \bigstar (t\omega,\uno) =
(t\omega,\uno) \bigstar  \exp_\star(t\widehat{\beta}).
\]

 If the system (\ref{eq:systnormal}) is Hamiltonian, the change of variables is canonical symplectic and the  transformed system (\ref{eq:systnormaltrans}) is Hamiltonian.
\end{theorem}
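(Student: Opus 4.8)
The plan is to solve the transformation equation (\ref{eq:normaleqn}),
\[
\xi_\omega \kappa + \widehat{\beta}\star\kappa = \kappa\star\beta,
\]
recursively in the number of letters, exploiting that $\xi_\omega$ is diagonal with kernel spanned by the \emph{resonant} words. For $w=\bk_1\cdots\bk_n$ put $\sigma(w)=(\bk_1+\cdots+\bk_n)\cdot\omega$, so that $\xi_\omega$ multiplies the $w$-component by $i\sigma(w)$; call $w$ resonant when $\sigma(w)=0$ and let $P$, $Q=I-P$ be the projections onto the resonant and non-resonant parts (by Remark~\ref{rem:generalized} it suffices to look for a change with $v=0$). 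Two structural facts drive everything. First, $\sigma$ is additive under concatenation, so $\xi_\omega$ is a derivation of $\star$ (consistently, $\Xi_{t\omega}=\exp(t\xi_\omega)$ is the automorphism of Section~\ref{sss:eqnsoverlineG}), and $\xi_\omega$ is invertible on the range of $Q$ with inverse the multiplication by $1/(i\sigma(w))$. Second, the letters of any word in a shuffle $w\sh w'$ are exactly those of $w$ and $w'$ together, so $\sigma$ is \emph{constant} on $w\sh w'$ with value $\sigma(w)+\sigma(w')$; hence for each of $P$, $Q$, $\xi_\omega$ and $(\xi_\omega|_{\mathrm{range}\,Q})^{-1}$ the shuffle sum of the transformed components is either the original shuffle sum or zero, so all of these operators map $\g$ into $\g$, and $\Xi_v$ maps $\G$ into $\G$.

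For the construction I would not solve (\ref{eq:normaleqn}) for $\kappa$ directly but instead seek the generator $\chi\in\g$ of $\kappa=\exp_\star(\chi)$ subject to the normalization $P\chi=0$; this guarantees $\kappa\in\G$ from the outset, by (\ref{eq:exp}). Rewriting (\ref{eq:normaleqn}) as $\widehat\beta=\mathrm{Ad}_{\exp_\star\chi}(\beta)-(\xi_\omega\exp_\star\chi)\star(\exp_\star\chi)^{-1}$ and grading by word length $n$, the only occurrence of the length-$n$ unknown is through $-\xi_\omega\chi^{(n)}$, so the length-$n$ part reads $\widehat\beta^{(n)}+\xi_\omega\chi^{(n)}=g^{(n)}$, where $g^{(n)}\in\g$ is built, via $\star$, $[\cdot,\cdot]$ and $\xi_\omega$, from $\beta$ and the already-determined $\chi^{(j)}$, $j<n$. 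Imposing the normal-form condition $Q\widehat\beta^{(n)}=0$ and applying $Q$ gives $\xi_\omega(Q\chi^{(n)})=Qg^{(n)}$, solvable because $\xi_\omega$ is invertible off resonance; setting $Q\chi^{(n)}=(\xi_\omega|_{\mathrm{range}\,Q})^{-1}Qg^{(n)}$ and $P\chi^{(n)}=0$ fixes $\chi^{(n)}$, while applying $P$ yields $\widehat\beta^{(n)}=Pg^{(n)}$. By the preservation facts above every quantity stays in $\g$, so $\chi\in\g$, $\kappa=\exp_\star(\chi)\in\G$, and $\widehat\beta\in\g$ with $Q\widehat\beta=0$, which is exactly (\ref{eq:condnormal}).

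I expect the main obstacle to be precisely this guarantee that $\kappa$ lands in $\G$. It is tempting to read off $\kappa$ and $\widehat\beta$ componentwise from (\ref{eq:normaleqn}) — set $\kappa_w=0$ and $\widehat\beta_w=c_w$ for resonant $w$, and $\widehat\beta_w=0$, $\kappa_w=c_w/(i\sigma(w))$ otherwise — but the resonant components of $\kappa$ are \emph{not} free to vanish: already for $d=1$, $\omega=1$ and letters $\pm1$, the shuffle relation $\kappa_{+}\kappa_{-}=\kappa_{+-}+\kappa_{-+}$ forces the resonant entries $\kappa_{+-},\kappa_{-+}$ to be nonzero once $\kappa_\pm\neq0$. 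Passing through the generator $\chi$, whose resonant part \emph{can} be set to zero, is what repairs this, and verifying that each operation used in the recursion preserves $\g$ is the technical heart of the argument; the invertibility of $\xi_\omega$ on the non-resonant subspace is simultaneously the reason the recursion closes and the reason the resonant piece $P\beta$ cannot be removed, so $\widehat\beta=0$ is unattainable.

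The remaining assertions follow quickly. Since $\widehat\beta=P\widehat\beta$ we have $\xi_\omega\widehat\beta=0$, so by the bracket formula of Section~\ref{sss:tlag}, $[\overline{W}_{(\omega,0)},\overline{W}_{(0,\widehat\beta)}]=\overline{W}_{(0,\xi_\omega\widehat\beta)}=0$, which is the commuting statement. Moreover $\widehat\beta$, every $\star$-power of it, and hence $\exp_\star(t\widehat\beta)$, are supported on resonant words, on which $\Xi_{t\omega}$ acts as the identity; thus in the formal solution of Section~\ref{sss:eqnsoverlineG} the coefficient $\widehat\alpha(t)$ solves the \emph{autonomous} equation $(d/dt)\widehat\alpha=\widehat\alpha\star(\Xi_{t\omega}\widehat\beta)=\widehat\alpha\star\widehat\beta$, whence $\widehat\alpha(t)=\exp_\star(t\widehat\beta)$, and the two displayed $\bigstar$-identities are immediate from the definition of $\bigstar$ together with $\Xi_{t\omega}\exp_\star(t\widehat\beta)=\exp_\star(t\widehat\beta)$. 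Because the commuting vector fields $\overline{W}_{(\omega,0)}$ (the shift $\Theta\mapsto\Theta+t\omega$ in the angle variables) and $W_{\widehat\beta}$ have commuting flows, the flow of their sum factors in either order, giving $X(t)=\phi_t(X(0)+(0,t\omega))=\phi_t(X(0))+(0,t\omega)$ as stated. Finally, in the Hamiltonian case the change $x=W_\kappa(X)$ with $\kappa\in\G$ is canonically symplectic and carries the Hamiltonian of (\ref{eq:systnormal}) to that of the transformed system, as recalled in the Hamiltonian paragraphs of Sections~\ref{sec:words} and~\ref{sec:extended}, so (\ref{eq:systnormaltrans}) is again Hamiltonian.
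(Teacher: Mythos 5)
Your proposal is correct and follows essentially the same route as the paper's proof in Section~\ref{ss:proofnormal}: solve the homological equation (\ref{eq:normaleqn}) degree by degree through a generator in $\g$ whose resonant part is normalized to zero, invert the diagonal operator $\xi_\omega$ off resonance, retain the resonant part in $\widehat{\beta}$, verify the shuffle relations by using that $(\bk_1+\cdots+\bk_n)\cdot\omega$ is constant across each shuffle class, and establish $\g$-membership of the inhomogeneous term via the curve argument showing $(\xi_\omega \kappa)\star\kappa^{-1}\in\g$ (with $\Xi_{t\omega}$ preserving $\G$). The only difference is one of parametrization: you use a single exponential $\kappa=\exp_\star(\chi)$ with graded generator $\chi=\sum_n \chi^{(n)}$, whereas the paper composes a sequence of changes $\kappa^{([n])}=\exp_\star(\lambda^{([n])})$ with each $\lambda^{([n])}$ supported on $\W_n$; the resulting componentwise equations and the membership checks are identical, and your explicit treatment of the commuting, flow-factorization and Hamiltonian claims matches the derivations the paper places in the text surrounding Theorem~\ref{th:theoremnormal}.
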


When $\omega$ is {\em nonresonant}, i.e.\ $\bk\cdot \omega \neq 0$ for $\bk\neq {\bf 0}$, the theorem implies, in view of (\ref{eq:fund2bis}), that the transformed vector field $\overline{W}_{(\omega,\widehat{\beta})}(X)$ is independent of the angular variables $\Theta$. In other words,  (\ref{eq:systnormaltrans}) is a system where the angles have been {\em averaged} \cite{arnoldode}, \cite{arnoldmec}, \cite{aver}.
In the general situation with a nontrivial resonant module
\[
{\mathcal M}_\omega = \{\bk\in\Z^d: \bk\cdot\omega=0\},
\]
the transformed vector field depends on $\Theta$. However  this dependence is only through a number of combinations
$\bl_1\cdot \Theta$,\dots, $\bl_r\cdot \Theta$, $r<d$, where $\bl_1$, \dots, $\bl_r\in\Z^d$ are linearly independent and span the resonant module.

\begin{remark}\em Consider the {\em highly oscillatory} case where  (\ref{eq:systnormal}) depends on a small parameter $\delta$ and $\omega= \mathcal{O}(1/\delta)$, $W_{\beta}(x)=\mathcal{O}(1)$.
The combinations  not eliminated by the change of variables have the property that their velocities $(d/dt)\bl_i\cdot \Theta$ are $\mathcal{O}(1)$, as distinct from the situation for the original angles with $(d/dt) \theta = \mathcal{O}(1/\delta)$. In this sense, the {\em fast} angles have been averaged when forming (\ref{eq:systnormaltrans}).
\end{remark}

For convenience, we shall use the expression {\em oscillatory word} to refer to those words $\bk_1\dots\bk_n$ for which
$(\bk_1+\cdots +\bk_n)\cdot\omega \neq 0$. Thus  the theorem may be rephrased as saying that the contributions to the vector field corresponding to oscillatory words may be removed from  (\ref{eq:systnormal}) by means of a change of variables. Note that the set $\g_0$ of all $\widehat{\beta}\in\g$ such that $\widehat{\beta}_w=0$ for all oscillatory words is a Lie subalgebra of $\g$; this follows from the fact that $\widehat{\beta}$ is in $\g_0$ if and only if
$(\omega,0)$ and $(0,\widehat{\beta})$ commute.

If we now express the commuting vector fields $\overline{W}_{(\omega,0)}(X)$ and $\overline{W}_{(0, \widehat \beta)}(X)$
in terms of the original variables $x$
by applying the recipe for changing variables given in Subsection~\ref{sss:eqnsoverlineG},
we obtain a decomposition of the right-hand side of (\ref{eq:systnormal}) as a commuting sum of two terms:
\[\overline{W}_{(\omega, \beta)}(x) =
 \overline{W}_{(\omega,\kappa^{-1} \star \xi_{\omega} \kappa)}(x)+
 W_{\kappa^{-1}\star \widehat{\beta}\star\kappa}(x).
\]
The second of these generates a flow $$x(t) = W_{\kappa^{-1}\star\exp_\star(t\widehat{\beta})\star\kappa}(x(0))$$ where the motion of the fast angles has been averaged. The former generates a quasi\-per\-iodic flow
$$x(t) =\overline{W}_{(0,\kappa^{-1})\bigstar(t\omega,\uno)\bigstar(0,\kappa)}(x(0))= \overline{W}_{(t\omega,\kappa^{-1}
\star \Xi_{t \omega} \kappa)}(x(0)).$$
Finally, the flow of (\ref{eq:systnormal}) is  given by
\[x(t) = \overline{W}_{(t\omega, \kappa^{-1}\star\exp_\star(t\widehat{\beta})\star\kappa\star\Xi_{t\omega})}(x(0)).\]

\begin{remark} \em In the nonresonant case, the commuting decomposition of $\overline{W}_{(\omega, \beta)}(x)$ has been obtained in \cite[Theorem 5.5]{part2}  by means of a different (but related) technique.
\end{remark}
\section{Splitting methods}
\label{sec:split}

Splitting algorithms \cite{ssc}, \cite{hlw} are natural candidates to integrate perturbed integrable problems. In this connection, it is extremely important to emphasize that the practical implementation of splitting methods is {\em not} necessarily based on the simple format (\ref{eq:ode2}). Such a simple format is typically  reached after  suitable changes of variables and is quite convenient for the analysis.
These points are discussed in the Appendix.

 Given real coefficients, $a_j$ and $b_j$, $j=1,\dots,r$, we study the splitting integrator for (\ref{eq:ode2}) defined by
\begin{equation}\label{eq:integrat}
\widetilde{\phi}_h = \phi^{(P)}_{b_rh}\circ \phi^{(U)}_{a_rh}\circ\cdots\circ \phi^{(P)}_{b_1h}\circ \phi^{(U)}_{a_1h}.
\end{equation}
Here $h$ is the step-length, $\widetilde{\phi}_h$ the mapping in $\R^D$ that advances the numerical solution over one time step, and $\phi_t^{(U)}$ and
$\phi_t^{(P)}$ denote respectively the exact $t$-flows of the split systems corresponding to the unperturbed  dynamics
\begin{equation}\label{eq:unpertur}
\frac{d}{dt} \left[ \begin{matrix}y\\ \theta\end{matrix}\right]
= \left[ \begin{matrix}0\\ \omega\end{matrix}\right],
\end{equation}
and the perturbation
\begin{equation}\label{eq:pertur}
\frac{d}{dt} \left[ \begin{matrix}y\\ \theta\end{matrix}\right]
=
f(y,\theta).
\end{equation}
If we set
$$
a=\sum_{j=1}^ra_j,\qquad b=\sum_{j=1}^rb_j,
$$
the integrator is {\em consistent} if $a=b=1$.

Since the unperturbed dynamics with frequencies $\omega_j$ is reproduced exactly by (\ref{eq:integrat}), one would naively hope that the accuracy of the integrator would be dictated for the size of $f$ uniformly in $\omega$. It is well known that such an expectation is unjustified, see e.g. \cite{molly1}, \cite{molly2}.

\subsection{Extended word series expansion of the local error}

Clearly, the mapping $\phi_t^{(U)}$ has an expansion in extended word series
$$
\phi_t^{(U)}(x) = \overline{W}_{(t\omega,\uno)}(x),\qquad (t\omega,\uno)\in\overline{\G};
$$
furthermore,  using Example 2 in Section~\ref{sec:words},
$$
\phi_t^{(P)}(x) = \overline{W}_{(0,\tau(t))}(x),\qquad (0,\tau(t))\in\overline{\G},
$$
where $\tau(t)\in\G$  comprises the  Taylor coefficients, i.e.\ $\tau_w(t)= t^n/n!$ if $w\in\W_n$.
The following result makes use of the algebraic formalism to provide explicitly the expansion of the numerical solution.

\begin{theorem}\label{th:integrat}The splitting integrator  $\widetilde{\phi}_h$ in (\ref{eq:integrat}) possesses the expansion
$$\widetilde{\phi}_h(x) = \overline{W}_{(ha\omega,\widetilde{\alpha}(h))}(x),
$$
where
$\widetilde{\alpha}(h) \in\G$ is specified by
$\widetilde{\alpha}_\emptyset(h)=1$ and, for $n= 1,2,\dots$,
\begin{equation}\label{eq:alfatilde}\widetilde{\alpha}_{\bk_1 \cdots \bk_n}(h) = h^n\sum_{1\leq j_1 \leq \cdots \leq j_n \leq r} \frac{b_{j_1}\cdots b_{j_n}}{\sigma_{j_1 \cdots j_n}}\,
\exp(i \, (c_{j_1}\bk_1 +\cdots +c_{j_n}\bk_n  ) \cdot \omega h).
\end{equation}
Here,
 $$c_j = a_1 + \cdots + a_{j}, \qquad 1\leq j\leq r,$$
and,
\begin{eqnarray*}
  \sigma_{j_1\cdots j_n}=\frac{1}{n!} &\mbox{\rm if }& j_1 = \cdots = j_n,\\
  \sigma_{j_1\cdots j_n}=\frac{1}{\ell!}\, \sigma_{j_{\ell+1}\cdots j_n} &\mbox{\rm if }& \ell<n,\quad j_1=\cdots = j_{\ell} < j_{\ell+1} \leq \cdots \leq j_n.
\end{eqnarray*}
\end{theorem}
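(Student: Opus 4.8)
The plan is to write the integrator (\ref{eq:integrat}) as a composition of $2r$ elementary flows and to turn that composition into a single $\bigstar$-product in $\overline{\G}$ by means of the substitution rule (\ref{eq:act2}). Using the two expansions recorded just before the statement, $\phi^{(U)}_{a_jh}(x)=\overline{W}_{(a_jh\omega,\uno)}(x)$ and $\phi^{(P)}_{b_jh}(x)=\overline{W}_{(0,\tau(b_jh))}(x)$, and applying (\ref{eq:act2}) repeatedly from the innermost factor outward, I obtain
\begin{equation*}
(ha\omega,\widetilde{\alpha}(h))=(a_1h\omega,\uno)\bigstar(0,\tau(b_1h))\bigstar\cdots\bigstar(a_rh\omega,\uno)\bigstar(0,\tau(b_rh)).
\end{equation*}
At each stage the accumulated inner factor is an element of the group $\overline{\G}$, so the hypothesis $\gamma\in\G$ required by (\ref{eq:act2}) is met throughout. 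Since $\overline{\G}$ is closed under $\bigstar$, the right-hand side lies in $\overline{\G}$, and this at once yields the asserted membership $\widetilde{\alpha}(h)\in\G$ with no separate verification of the shuffle relations.

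Next I would unwind the $\bigstar$-product through its semidirect-product form $(u,\gamma)\bigstar(v,\delta)=(v+u,\gamma\star\Xi_{u}\delta)$, valid here because every empty-word coefficient in sight equals $1$. The $\C^{d}$-components accumulate additively to $h(a_1+\cdots+a_r)\omega=ha\omega$, which matches the angular part of the claim. In the word part, each $\tau(b_jh)$ is prefixed by $\Xi$ with argument the total angle shift of the factors to its left, namely $\Xi_{c_jh\omega}$ with $c_j=a_1+\cdots+a_j$, whereas the factors $(a_jh\omega,\uno)$ drop out because $\Xi_{v}\uno=\uno$ is the $\star$-unit. This gives
\begin{equation*}
\widetilde{\alpha}(h)=\big(\Xi_{c_1h\omega}\tau(b_1h)\big)\star\cdots\star\big(\Xi_{c_rh\omega}\tau(b_rh)\big).
\end{equation*}

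The last step is to expand this $r$-fold convolution using (\ref{eq:convol}). The coefficient on $\bk_1\cdots\bk_n$ is the sum, over all ways of cutting the word into $r$ consecutive (possibly empty) blocks allotted to the successive factors, of the products of the pertinent factor coefficients; each such cut is encoded by a nondecreasing sequence $1\le j_1\le\cdots\le j_n\le r$, in which $j_l$ records the factor receiving the $l$-th letter. For the block of $m_j$ letters sent to factor $j$, the corresponding coefficient $(b_jh)^{m_j}/m_j!$ of $\tau(b_jh)$ supplies the monomial $b_j^{m_j}$, a power $h^{m_j}$, and a factor $m_j!$ in the denominator, while $\Xi_{c_jh\omega}$ supplies the phase $\exp\!\big(i\,(\text{sum of that block's letters})\cdot c_j\omega h\big)$. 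Collecting over the blocks yields the global $h^{n}$, the monomial $b_{j_1}\cdots b_{j_n}$, the phase $\exp(i\,(c_{j_1}\bk_1+\cdots+c_{j_n}\bk_n)\cdot\omega h)$, and a denominator equal to the product of the block factorials, which is exactly $\sigma_{j_1\cdots j_n}$ as given by the stated recursion; this reproduces (\ref{eq:alfatilde}).

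I expect the only delicate part to be the bookkeeping in this last step: one must check that the $\Xi$-arguments accumulate exactly to $c_jh\omega$, so that the phases assemble into $c_{j_1}\bk_1+\cdots+c_{j_n}\bk_n$, and that the factorials harvested from the Taylor coefficients match the recursion defining $\sigma$. A short induction on $r$, whose inductive step peels off the first block of equal indices and thereby mirrors that recursion, settles both points cleanly.
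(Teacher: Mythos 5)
Your proposal is correct and follows essentially the same route as the paper: the paper's proof consists precisely of writing $\widetilde{\phi}_h$ via (\ref{eq:act2}) as the ordered product $(a_1h\omega,\uno)\bigstar(0,\tau(b_1h))\bigstar\cdots\bigstar(a_rh\omega,\uno)\bigstar(0,\tau(b_rh))$ and then computing the $\bigstar$-products from the definition, which is exactly what you carry out in detail (the paper leaves the bookkeeping of the phases $\Xi_{c_jh\omega}$ and the block factorials to the reader). One small remark: read literally, the stated recursion makes $\sigma_{j_1\cdots j_n}$ the product of the \emph{reciprocals} of the block factorials, so your (correct) computation matches (\ref{eq:alfatilde}) under the intended reading in which $\sigma_{j_1\cdots j_n}$ denotes the product of the block factorials themselves --- an inconsistency in the statement, not in your argument.
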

\begin{proof} From (\ref{eq:act2}), we know that $\widetilde{\phi}_h$ has an expansion in extended word series and that the family of coefficients is given by
(pay attention to the ordering)
$$
\big(a_1h\omega,\uno\big)\bigstar\big(0,\tau(b_1h)\big)\bigstar\cdots
\bigstar\big(a_rh\omega,\uno\big)\bigstar\big(0,\tau(b_rh)\big);
$$
it is  enough to compute, according to the definition, the products $\bigstar$ in this expression.
\end{proof}
\begin{remark}\label{rem:quad} The associated quadrature rule. \em For words with one letter, the theorem yields:
$$
\widetilde{\alpha}_{\bk}(h) = h\sum_{1\leq j \leq r} b_{j}\,
\exp(i \, c_j\bk \cdot \omega h).
$$
This obviously corresponds to the approximation of the exact coefficient $\alpha_\bk(h)$ (defined in (\ref{eq:alfa1}) and (\ref{eq:lambda})) by the (univariate) quadrature rule that on the unit interval has abscissas $c_j$ and weights $b_j$. This   rule will be consistent if $b=1$, which is implied by the consistency of the integrator.
\end{remark}
\begin{remark} Associated cubature rules. \label{rem:cub}\em Similarly, for $n>1$,
(\ref{eq:alfatilde}) corresponds to approximating (\ref{eq:alpha}) with a cubature rule for the simplex. If the univariate quadrature is consistent,  so is the cubature rule for each $n> 1$, because, by using the multinomial expansion,
\begin{eqnarray*}
&&\sum_{1\leq j_1 \leq \cdots \leq j_n \leq r} \frac{b_{j_1}\cdots b_{j_n}}{\sigma_{j_1 \cdots j_n}}=
\sum_{n_1+\cdots +n_r = n} \frac{b_1^{n_1}\cdots b_{r}^{n_r}}{n_1!\cdots n_r!} \\&&\qquad\qquad\qquad\qquad\qquad\qquad\qquad= \frac{1}{n!}
\big(\sum_{j=1}^r b_j\big)^n = \big(\sum_{j=1}^r b_j\big)^n \:{\rm Vol}({\mathcal S}(1)).
\end{eqnarray*}
\end{remark}

Discussions perhaps become clearer by introducing  scaled coefficients $A_w$ and $\widetilde{A}_w$ such that
$$\alpha_{\bk_1\cdots\bk_n}(h)= h^{n}A_{\bk_1\cdots\bk_n}(h),\qquad  \widetilde{\alpha}_{\bk_1\cdots\bk_n}(h) = h^{n}\widetilde{A}_{\bk_1\cdots\bk_n}(h) .
$$ Note that, by performing the change of variables $t_j = h t_j^\prime$, $j=1,\dots,n$, in (\ref{eq:alpha}),
\begin{equation}\label{eq:A}
A_{\bk_1\cdots\bk_n}\!(h) =\int\cdots \int_{{\mathcal S}_n(1)} \exp\big(i (t^\prime_1\bk_1+\cdots+t^\prime_n\bk_n)\cdot \omega h\big)\, dt_1^\prime\cdots dt_n^\prime,
\end{equation}
and that, therefore,
$$
\mid A_{\bk_1\cdots\bk_n}\!(h)\mid \leq {\rm Vol}({\mathcal S}_n(1)) =\frac{1}{n!}.
$$

With these preparations, we have proved our next result:
\begin{theorem} The local error of the splitting integrator  $\widetilde{\phi}_h$ in (\ref{eq:integrat}) possesses the expansion
$$
\widetilde{\phi}_h(x)-\phi_h(x) = \overline{W}_{(h(a-1)\omega,\widetilde{\alpha}(h)-\alpha(h))}(x).
$$
i.e.\
\begin{eqnarray}
&&\widetilde{\phi}_h(x_0)-\phi_h(x_0) = \left[ \begin{matrix}0\\h \big(a-1\big)\omega \end{matrix}\right] \label{eq:localformal}\\
&&\qquad \qquad + \sum_{n=1}^{\infty} h^n
\sum_{\bk_1,\dots, \bk_n\in\Z^d} \big(\widetilde{A}_{\bk_1\cdots \bk_n}(h)-A_{\bk_1\cdots \bk_n}(h)\big)\, f_{\bk_1\cdots \bk_n}(x_0).
\nonumber
\end{eqnarray}
\end{theorem}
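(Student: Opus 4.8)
The plan is to read off the result by straightforward subtraction, exploiting the linearity of the extended word series construction in its index $(v,\delta)\in\calC$. First I would assemble the two ingredients already established. Theorem~\ref{th:integrat} expresses the numerical flow as $\widetilde{\phi}_h(x) = \overline{W}_{(ha\omega,\widetilde{\alpha}(h))}(x)$, and the expansion (\ref{eq:solucion}) of the exact solution flow, written in the compact notation introduced right after the definition of the extended word series, reads $\phi_h(x) = \overline{W}_{(h\omega,\alpha(h))}(x)$, with $(h\omega,\alpha(h))\in\overline{\G}$.

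The key observation is that the map $(v,\delta)\mapsto \overline{W}_{(v,\delta)}(x)$ is linear on $\calC$: by the very definition of an extended word series, both the constant vector part $\left[\begin{smallmatrix}0\\ v\end{smallmatrix}\right]$ and the word-indexed part $\sum_{w\in\W}\delta_w f_w(x)$ depend linearly on $(v,\delta)$. Hence the difference of two extended word series is again an extended word series, indexed by the difference of the coefficient pairs, $\overline{W}_{(v_1,\delta_1)}(x)-\overline{W}_{(v_2,\delta_2)}(x)=\overline{W}_{(v_1-v_2,\delta_1-\delta_2)}(x)$. Taking $(v_1,\delta_1)=(ha\omega,\widetilde{\alpha}(h))$ and $(v_2,\delta_2)=(h\omega,\alpha(h))$ and noting $ha\omega-h\omega=h(a-1)\omega$ immediately yields the compact statement $\widetilde{\phi}_h(x)-\phi_h(x)=\overline{W}_{(h(a-1)\omega,\widetilde{\alpha}(h)-\alpha(h))}(x)$.

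To obtain the fully expanded form (\ref{eq:localformal}), I would merely write out the definition of $\overline{W}_{(h(a-1)\omega,\widetilde{\alpha}(h)-\alpha(h))}(x_0)$. The vector part is $\left[\begin{smallmatrix}0\\ h(a-1)\omega\end{smallmatrix}\right]$; the empty word contributes $(\widetilde{\alpha}_\emptyset(h)-\alpha_\emptyset(h))\,x_0=0$ since both coefficients equal $1$; and for each $n\geq 1$ and each word $\bk_1\cdots\bk_n$ I substitute the scaled coefficients $\alpha_{\bk_1\cdots\bk_n}(h)=h^nA_{\bk_1\cdots\bk_n}(h)$ and $\widetilde{\alpha}_{\bk_1\cdots\bk_n}(h)=h^n\widetilde{A}_{\bk_1\cdots\bk_n}(h)$, which factors out the explicit power $h^n$ and reproduces (\ref{eq:localformal}) term by term.

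There is essentially no hard step: the entire content is carried by Theorem~\ref{th:integrat} and by the expansion (\ref{eq:solucion}), and the present claim is simply their difference. The only point deserving a moment of care is confirming that the exact flow $\phi_h$ really carries the coefficient pair $(h\omega,\alpha(h))$ --- that is, that the formal Picard expansion (\ref{eq:solucion}) coincides with $\overline{W}_{(h\omega,\alpha(h))}$ --- but this identification was already made when the extended word series notation was introduced, so nothing further is needed here.
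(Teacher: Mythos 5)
Your proposal is correct and is essentially the paper's own argument: the paper states the theorem as an immediate consequence (``With these preparations, we have proved our next result'') of Theorem~\ref{th:integrat}, the identification $\phi_h(x)=\overline{W}_{(h\omega,\alpha(h))}(x)$ from (\ref{eq:solucion}), and the scaled coefficients $\alpha_w(h)=h^nA_w(h)$, $\widetilde{\alpha}_w(h)=h^n\widetilde{A}_w(h)$, combined by linearity of $(v,\delta)\mapsto\overline{W}_{(v,\delta)}(x)$ exactly as you do. Your explicit remarks on the cancellation of the empty-word term and the provenance of the coefficient pair $(h\omega,\alpha(h))$ merely spell out what the paper leaves implicit.
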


\subsection{Estimates}

In order to obtain error estimates it is now necessary to truncate the infinite series in (\ref{eq:localformal}) and we shall do so in the next theorem, whose proof is given in Section~\ref{ss:proofestimate}. We assume hereafter that:

\begin{enumerate}
\item The function $f(x) = f(y,\theta)$ is defined in a set $\Omega = B_R(y_0)\times \mathbb{T}^d$, where $B_R(y_0)$ is the ball $\{y: |y-y_0|<R\}\subset \R^{D-d}$.
\item There exists a finite set of indices $\I\subset \Z^d$ such that for $\bk\notin \I$ the Fourier coefficient $\hat f_\bk$ vanishes.
\item There exists an integer $N\geq 2$, such that the Fourier coefficients $\hat f_\bk$ and their partial derivatives of order $\leq N-1$ are continuous and bounded in $B_R(y_0)$.
\end{enumerate}

In the first of these hypotheses the form of the domain $\Omega$ is natural since $f(y,\theta)$ is periodic in each of the components of $\theta$. As shown in e.g.\ \cite{orlando}, the second hypothesis may be relaxed for the conclusions of the theorem to hold;   it  makes however possible to avoid distracting technicalities. In this connection it should be noted that, for nonlinear problems, even if $f$  has a finite number of Fourier modes  the solution $x(t)$ in (\ref{eq:solucion}) will include arbitrarily high frequencies (the product of $\lambda$'s in (\ref{eq:alpha}) adds the corresponding wave numbers $\bk$).\footnote{For smooth solutions, terms with high frequency must have small amplitude, a fact that may be exploited in the derivation of error bounds \cite{molly1}, \cite{molly2}. This point will not be studied here.}
\begin{theorem}
\label{th:estim}
Assume  that the system (\ref{eq:ode2})  being integrated satisfies the assumptions above.
Then there exist positive constants $h_0$, $C$, {\em both independent of $\omega$,} such that:
\begin{enumerate}
\item For $|h|<h_0$ and arbitrary $\theta_0$, the true solution $\phi_h(x_0)$, $x_0=(y_0,\theta_0)$, and the numerical solution $\widetilde{\phi}_h(x_0)$  are well defined and lie in $\Omega$.
\item The local error at $x_0$ satisfies
\begin{eqnarray}
&&\widetilde{\phi}_h(x_0)-\phi_h(x_0) = \left[ \begin{matrix}0\\h \big(a-1\big)\omega \end{matrix}\right] \nonumber\\
&&\qquad \qquad + \sum_{n=1}^{N-1} h^n
\sum_{\bk_1,\dots, \bk_n\in\I} \big(\widetilde{A}_{\bk_1\cdots \bk_n}(h)-A_{\bk_1\cdots \bk_n}(h)\big)\, f_{\bk_1\cdots \bk_n}(x_0)\nonumber\\
&&\qquad\qquad +{\calR}_h(x_0),\label{eq:estimate}
\end{eqnarray}
where $|\calR_h(x_0)|\leq C|h|^N$.
\end{enumerate}
\end{theorem}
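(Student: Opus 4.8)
The plan is to derive the estimate by truncating the formal expansion of Theorem~5 (the local error series) and bounding the tail. The starting point is the identity
\[
\widetilde{\phi}_h(x_0)-\phi_h(x_0) = \left[ \begin{matrix}0\\h(a-1)\omega\end{matrix}\right] + \sum_{n=1}^{\infty} h^n \sum_{\bk_1,\dots,\bk_n\in\Z^d} \big(\widetilde{A}_{\bk_1\cdots\bk_n}(h)-A_{\bk_1\cdots\bk_n}(h)\big)\, f_{\bk_1\cdots\bk_n}(x_0),
\]
which holds formally. First I would split this into the terms with $1\le n\le N-1$ and the remainder $\calR_h(x_0)$ collecting all $n\ge N$. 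The key simplification is hypothesis~2: since $\hat f_\bk$ vanishes for $\bk\notin\I$, the word-basis function $f_{\bk_1\cdots\bk_n}$ (built by the recursion (\ref{eq:wbf}) from the $f_\bk$) vanishes whenever any letter lies outside $\I$. Hence every inner sum ranges only over $\bk_1,\dots,\bk_n\in\I$, a finite set, so each sum is finite and each term is a genuine smooth function rather than a formal object. This legitimizes the truncated sum displayed in (\ref{eq:estimate}) and reduces the analytic work to bounding $\calR_h$.

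The main obstacle is establishing the uniform-in-$\omega$ bound $|\calR_h(x_0)|\le C|h|^N$, because the number of words of length $n$ over the alphabet $\I$ grows like $|\I|^n$ and one must control both the coefficients and the word-basis functions well enough that the tail $\sum_{n\ge N}$ converges and is $O(h^N)$. For the coefficients, I would use the bound already recorded in the excerpt, $|A_{\bk_1\cdots\bk_n}(h)|\le 1/n!$, together with an analogous bound for the numerical coefficients: from (\ref{eq:alfatilde}) one reads off $|\widetilde{A}_{\bk_1\cdots\bk_n}(h)| \le \sum_{j_1\le\cdots\le j_n} b_{j_1}\cdots b_{j_n}/\sigma_{j_1\cdots j_n}$, and by the multinomial identity of Remark~\ref{rem:cub} this equals $(\sum_j |b_j|)^n/n!$ (with $|b_j|$ in place of $b_j$), giving a clean $1/n!$-type decay. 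The crucial point is that these bounds on $\widetilde{A}-A$ are \emph{independent of $\omega$}, since $\omega$ enters only through unimodular exponential factors $\exp(i(\cdots)\cdot\omega h)$; this is precisely where the uniformity in $\omega$ claimed in the theorem comes from, and it must be flagged carefully.

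For the word-basis functions I would invoke hypotheses~1 and~3: on the compact closure of $\Omega$ the finitely many $\hat f_\bk$, $\bk\in\I$, and their derivatives up to order $N-1$ are bounded by some constant $M$; by the recursion (\ref{eq:wbf}), $f_{\bk_1\cdots\bk_n}$ is a fixed universal combination of products of these derivatives, so there is a bound of the form $|f_{\bk_1\cdots\bk_n}(x_0)|\le c_n M^n$ where $c_n$ grows at most factorially (the number of elementary differentials of order $n$). Combining the coefficient bound $|\widetilde{A}-A|\le 2/n!$ (times a constant depending on the $a_j,b_j$), the count $|\I|^n$ of words, and the function bound $c_n M^n$, the generic term of the tail is bounded by a constant to the $n$ divided by a growth that makes $\sum_{n\ge N}(\text{const}\cdot|h|)^n$ convergent for $|h|$ small; extracting the factor $|h|^N$ from the leading term gives $|\calR_h(x_0)|\le C|h|^N$. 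Finally, once this bound is in hand, part~1 of the theorem---that $\phi_h(x_0)$ and $\widetilde{\phi}_h(x_0)$ are well defined and stay in $\Omega$ for $|h|<h_0$---follows by a standard continuity/existence argument: for $h$ small the displacement from $x_0$ is $O(|h|)$ uniformly, so the $y$-components remain inside $B_R(y_0)$, while the $\theta$-components live on $\mathbb{T}^d$ and need no confinement. I would choose $h_0$ small enough that both the convergence of the tail and this confinement hold simultaneously, with $h_0$ and $C$ depending only on $R$, $M$, $|\I|$, $N$, and the method coefficients, hence independent of $\omega$.
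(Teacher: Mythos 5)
There is a genuine gap: your plan to bound $\calR_h$ by summing the tail $\sum_{n\ge N}$ of the formal series cannot work under the stated hypotheses. A word-basis function $f_{\bk_1\cdots\bk_n}$ is built from partial derivatives of the $\hat f_\bk$ of order up to $n-1$, and hypothesis 3 only provides continuity and boundedness of derivatives of order $\le N-1$; hence every term with $n\ge N+1$ letters is simply not controlled (the higher derivatives are not assumed bounded on $B_R(y_0)$, so your constant $M$ does not exist for them). Even granting uniform bounds on derivatives of all orders, you would face a second problem: the identity you start from is only formal, and for merely smooth (non-analytic) data there is no reason the series should converge to $\widetilde{\phi}_h-\phi_h$, so the decomposition ``partial sum plus tail'' does not by itself represent the actual local error. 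Your factorial bookkeeping (the $1/n!$ decay of the coefficients against the roughly $(n-1)!$ elementary differentials hidden inside each $f_w$) is delicate at best, and in any case unavailable here.

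The paper avoids both problems by never summing a tail. It first proves part 1 directly (boundedness and Lipschitz continuity of $f$ on $\Omega$ give $|y(t)-y_0|\le C_1|t|$ for the exact flow, and the same argument is applied stage by stage through the $r$ intermediate points of the splitting map); note that part 1 is needed \emph{before} part 2, not derived from it as in your last step. Then, instead of truncating a convergent series, the paper stops the Picard iteration that generates the expansion at word length $N$, which yields an \emph{exact} integral representation of the remainder,
\[
\calR_h^{(T)}(x_0)=\sum_{\bk_1\cdots \bk_N\in\W_N}\int_0^h dt_N\, e^{i\bk_N\cdot\omega t_N}\cdots\int_0^{t_2}dt_1\, e^{i\bk_1\cdot\omega t_1}\, f_{\bk_1\cdots \bk_N}(\phi_{t_1}(x_0)),
\]
a finite sum (effectively over $\I^N$, by hypothesis 2) involving only words of exactly $N$ letters---hence only derivatives of order $\le N-1$---evaluated along the flow, which stays in $\Omega$ by part 1; the simplex volume supplies the factor $|h|^N$ and the unimodular exponentials give uniformity in $\omega$. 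The analogous bound for $\widetilde{\phi}_h$ rests on a device absent from your proposal: up to the angle shift $h(a_1+\cdots+a_r)\omega$, the splitting map is the \emph{exact} flow of the nonautonomous system obtained by replacing $\exp(i\bk\cdot\omega t)$ with the piecewise constant functions $\widetilde{\lambda}_\bk(t)=rb_j\exp\big(i\bk\cdot\omega(a_1+\cdots+a_j)h\big)$ frozen on the subintervals, so the identical Picard-remainder argument applies to it. Your observations that hypothesis 2 restricts all sums to the finite alphabet $\I$ and that $\omega$ enters the coefficients only through unimodular factors are correct and are indeed used by the paper, but without the finite Picard remainder (or some substitute for it) the estimate $|\calR_h(x_0)|\le C|h|^N$ does not follow from your argument.
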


The theorem reduces the estimation of the local error to the estimation of the quantities $\widetilde{A}_{\bk_1\cdots \bk_n}(h)-A_{\bk_1\cdots \bk_n}(h)$. These are errors arising in the quadrature of scalar smooth trigonometric functions and {\em  are completely independent of the function $f$}.

{\em It is assumed hereafter that the integrator is  consistent}. We first analyze the local error in the limit $h\rightarrow 0$. The condition $a=1$ implies that the first term in the right-hand side of (\ref{eq:estimate}) vanishes. Furthermore, from Remark~\ref{rem:quad}, $\widetilde{A}_{\bk}(h)-A_{\bk}(h)=\mathcal{O}(h)$ as $h\rightarrow 0$ and we conclude that $\widetilde{\phi}_h(x)-\phi_h(x)=\mathcal{O}(h^2)$.
Note that, for the word with ${\bf 0}\in\Z^d$ as its only letter, $\widetilde{A}_{\bf 0}(h)-A_{\bf 0}(h)=0$.
Moreover, in view of Remark~\ref{rem:cub}, for each $n\leq N-1$, the $n$-th term in the sum in (\ref{eq:estimate}) is actually $\mathcal{O}(h^{n+1})$ rather than
$\mathcal{O}(h^{n})$.

 If, additionally, the underlying univariate quadrature rule is second-order accurate, i.e.\ $\sum b_jc_j = 1/2$, then $\widetilde{A}_{\bk}(h)-A_{\bk}(h)=\mathcal{O}(h^2)$, and the integrator will be second order accurate, $\widetilde{\phi}_h(x)-\phi_h(x)=\mathcal{O}(h^3)$, provided that  hypothesis 3 holds with  $N\geq 3$.

The argument may be taken further to translate accuracy properties of the associated quadrature and cubature rules into accuracy properties of the integrator in the limit $h\rightarrow 0$. In this way one recovers the order conditions for splitting methods  listed in \cite{new} (cf. \cite{royal}). We shall not pursue that path:  our interest lies in the size of the local error when $h$ is not small relative to the periods present in the  dynamics, a scenario that we discuss next.

It is well known that for a quadrature rule that is exact for  polynomials of degree $\leq \sigma$,
$$|\widetilde{A}_{\bk}(h)-A_{\bk}(h)| \leq C |\bk \cdot \omega|^{\sigma+1} h^{\sigma+1},$$
for a constant $C$ that only depends on the rule. Therefore for the quadrature errors to be small it is {\em  necessary that $|h|$ be small with respect to $\min (1/|\bk \cdot \omega|)$}, where the minimum is extended to all $\bk$ with
$\bk\cdot\omega\neq 0$. We reach the unwelcome conclusion that the size of the bound in (\ref{eq:estimate}) depends {\em both} on the size of the perturbation $f$  {\em and} on  $\omega$.

 \begin{example}\em Consider the familiar Strang splitting, $r=2$,
 \begin{equation}\label{eq:strang}
 a_1=1/2, \quad a_2 =1/2,\quad b_1=1, \quad b_2=0.
 \end{equation}
 The underlying quadrature formula is the second-order accurate midpoint rule. For this integrator, for each $\bk$ such that $\bk\cdot\omega\neq 0$,
\begin{equation}\label{eq:strangquad}
\widetilde{A}_{\bk}(h)-A_{\bk}(h) =
\exp((1/2)i\bk \cdot \omega h) - \frac{\exp(i\bk \cdot \omega h)-1}{i\bk \cdot \omega h}
\end{equation}
(for $\bk\cdot\omega=0$, $\widetilde{A}_{\bk}(h)=A_{\bk}(h)=1$).
An elementary computation leads to the bound
$$
|\widetilde{A}_{\bk}(h)-A_{\bk}(h)|\leq \frac{1}{24} |\bk \cdot \omega|^2 h^2,
$$
where the constant $1/24$ cannot be improved if the inequality has to hold for arbitrary  $h$. $\Box$
\end{example}

\begin{remark}\label{rem:counter} \em The dependence on $\omega$ of the local error is not an artifact introduced by our method of analysis.
 Here is an example. Consider the forced spring (see the Appendix), $(d/dt) p =-\omega^2 q+F$, $dq/dt = p$, where $F\neq 0$ is a time-independent force and $\omega> 0$. This is the Hamiltonian system with Hamiltonian $H = (1/2)p^2+(\omega^2/2)q^2-qF$ or, in action-angle variables
 $$H = \omega a-\sqrt{\frac{2a}{\omega}}\sin \theta\: F = \omega a -\frac{1}{2i} \sqrt{\frac{2a}{\omega}}\exp(i\theta) F+ \frac{1}{2i} \sqrt{\frac{2a}{\omega}}\exp(-i\theta) F.
 $$
There are two Fourier modes $k=\pm 1$ in the perturbation.

Choose initial conditions $p_0=1$, $q_0=0$ (with kinetic energy $1/2$ and no potential energy in the spring). If $h/(1/\omega)= 2\pi$, after one time step, the true solution has $p(h) = 1$ and Strang's method (\ref{eq:strang}) yields an approximation $\widetilde p(h) =
1-hF$; therefore a bound of the form $|\widetilde{p}(h)-p(h)|\leq C|h|^{\sigma+1}$, $|h|<h_0$, with $C$ and $h_0$ independent of $\omega$ cannot exist for $\sigma>0$.  Note that, after $m$ steps, the error in $p$ will be $mh$!
\end{remark}
\begin{figure}[t]
\begin{center}
%\vskip -7 truecm
\includegraphics[scale=.80]{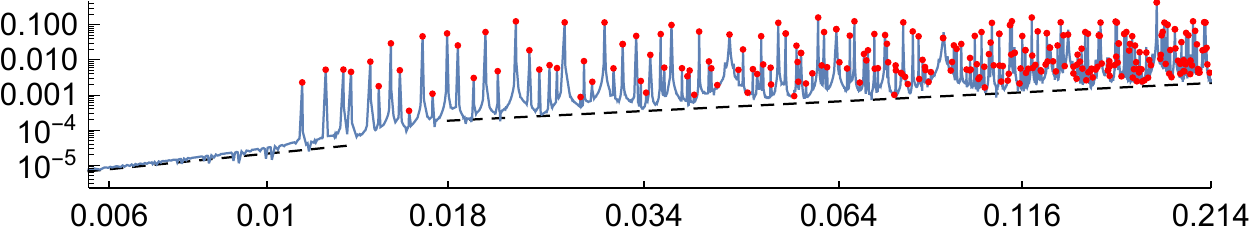}% MATLAB hmmresorte
\end{center}
%\vskip -4 truecm
\caption{Energy error at  time $T=50$ as a function of $h$ (in doubly logarithmic scale) for Example~\ref{example:fpu}. The discontinuous straight lines correspond to $\mathcal{O}(h^2)$ (left) and $\mathcal{O}(h)$ (right). Small circles have been located at points whose abscissa is a value of $h$ that leads to a first order numerical resonance (Section~\ref{ss:processing}). It is apparent that those points give rise to local maxima of the error.
}
\label{fig:picos}
\end{figure}

\begin{example}
\label{example:fpu}
\em In order to observe the behavior of the Strang's method (\ref{eq:strang}) in problems more involved than the scalar example in the last remark, we have integrated the Hamiltonian problem with $d=5$ degrees of freedom with Hamiltonian function from \cite[Chapter XIII.9]{hlw}
$$
\frac{1}{2} \sum_{j=1}^5 \big((p^j)^2+ {\omega_j}^2 (q^j)^2\big) + U(q),
$$
with
$$
U(q) = \frac{1}{8} (q^1q^2)^2+\left(\frac{1}{20} +q^2+q^3+q^4+\frac{5}{2}q^5\right)^4
$$
and $\omega_1 = 1$, $\omega_2=\omega_3=70$, $\omega_4 = 70\sqrt{2}$, $\omega_5=2\omega_2$.
The system is split by dividing the Hamiltonian into its harmonic (quadratic) part, with linear dynamics, and the perturbation corresponding to $U$ (see the Appendix).  We integrated this problem over the (very long) interval $0\leq t\leq 35000$ with the initial condition (also taken from \cite{hlw})
$$
p_0 = \left(-\frac{1}{5}, \frac{3}{5}, \frac{7}{10}, -\frac{9}{10}, \frac{4}{5}\right),\qquad
 q_0 = \left(1, \frac{3}{10\omega_2}, \frac{4}{5\omega_2},- \frac{11\sqrt{2}}{10\omega_4}, \frac{7}{10\omega_2}\right),
$$
for which the energy in the harmonic part is $4.225$. Figure~\ref{fig:picos} gives the error in the Hamiltonian at  time $t=50$ as a function of $h$.  Two different regimes are apparent in the figure:

\begin{enumerate}
\item For $h$ small, the error in the Hamiltonian is very approximately $ Ch^2$, as it corresponds to the second order of accuracy of Strang's splitting. In this regime the oscillatory nature of the problem is not relevant and the integrator may be analyzed by standard techniques, i.e.\ expansion of the local error {\em in powers of $h$} and transference, using stability, of local error bounds to bounds of the global error.
\item For $h$ large, the  error presents a very irregular behavior. This is due to the highly oscillatory character of the solution and, as we shall now describe, may be analyzed via the {\em word series expansion} of the local error.
\end{enumerate}

\end{example}

\subsection{Processing}
\label{ss:processing}
 It is clear that, as distinct from the global error, the quadrature error in (\ref{eq:strangquad}) varies regularly as $h$ varies. The irregularities in Fig.~\ref{fig:picos} stem from  cancelations, due to the oscillations, of local errors in consecutive time steps. For this reason, sharp  error estimates in highly oscillatory problems (see e.g.~\cite{molly1}, \cite{molly2}) do not bound the local error and then sum the bounds; they rather sum first and bound later, so as to take advantage of possible cancelations.  We use here an alternative approach that exploits the idea of {\em processing} that goes back to Butcher \cite{butcher69}. The presentation here follows \cite{cheap}.

If $\chi_h$ is  a near-identity mapping in $\R^D$ and $\widetilde{\phi}_h$ is an integrator, the mapping
\begin{equation}\label{eq:conj}
\widehat{\phi}_h = \chi_h^{-1}\circ \widetilde{\phi}_h\circ \chi_h
\end{equation}
defines a {\em processed} numerical integrator. For $m\geq 1$
\begin{equation}\label{eq:ksteps}
\widehat{\phi}_h^m = \big(\chi_h^{-1}\circ \widetilde{\phi}_h\circ \chi_h\big)^m = \chi_h^{-1}\circ \widetilde{\phi}_h^m\circ \chi_h;
\end{equation}
therefore to advance $m$ steps with  $\widehat{\phi}_h$ one may preprocess the initial condition to find $\chi_h(x_0)$, advance $m$ steps with the original method and then postprocess the numerical solution by applying $\chi^{-1}_h$. Postprocessing is only performed when output is desired, not at every time step. In practice, the idea of processing is useful if $\chi_h$ may be chosen in such a way that $\widehat{\phi}_h$ is more accurate in some sense than the original $\widetilde{\phi}_h$: one then obtains extra accuracy at the (hopefully small) price of having to perform the processing (this gives rise to Butcher's notion of effective order \cite{butcher69}, \cite{bss}). Here we use the idea of processing as a {\em technique of analysis}. We shall  process the splitting  method (\ref{eq:integrat}) by means of a mapping $\chi_h$ with an expansion in  word series: $\chi_h(x) = W_{\kappa(h)}(x)$, $\kappa(h)\in\G$ (see Remark~\ref{rem:generalized}). Then the processed integrator will  be expressible as an extended word series with coefficients of the form $(h\omega,\widehat{\alpha}(h))\in\overline{\G}$. By implication, the local error will be of the form (\ref{eq:localformal}) with $\widehat{A}_{\bk_1\cdots \bk_n}(h)-A_{\bk_1\cdots \bk_n}(h)$ in lieu of $ \widetilde{A}_{\bk_1\cdots \bk_n}(h)-A_{\bk_1\cdots \bk_n}(h)$. (We have used the obvious notation $h^n\widehat{A}_{\bk_1\cdots \bk_n}(h) =\widehat{\alpha}_{\bk_1\cdots \bk_n}$ for the scaled coefficients of the processed method and will similarly set
$h^nK_{\bk_1\cdots \bk_n}(h) =\kappa_{\bk_1\cdots \bk_n}(h)$.) Our policy is to determine the processing, i.e.\ to determine
$\kappa(h)\in\G$, in such a way that $\widehat{A}_{\bk_1\cdots \bk_n}(h)-A_{\bk_1\cdots \bk_n}(h)=0$, for all {\em oscillatory words}. We then may hope that the analysis of the processed integrator would be free from the difficulties usually associated with integrators of oscillatory problems. Finally the results on the processed integrator obtained in this way  will be translated into results for the original $\widetilde{\phi}_h$.

\subsubsection{First-order numerical resonances}

The conjugation  (\ref{eq:conj}) of the mappings may be  translated with the help of (\ref{eq:act2}) into the equation
\begin{equation}\label{eq:con2}
(h\omega,\widehat{\alpha}(h)) \bigstar (0,\kappa(h))= (0,\kappa(h))\bigstar (h\omega,\widetilde \alpha(h))
\end{equation}
for the coefficients.
 For words with one letter, (\ref{eq:con2}) implies, according to the definition of $\bigstar$:
$$
\exp(i\bk\cdot\omega h)\:K_\bk(h)+ \widehat{A}_\bk(h) = \widetilde{A}_\bk(h)+K_\bk(h).
$$
There are two cases to be analyzed. We first look  at words (including $\bk={\bf 0}$) that are not oscillatory, i.e.\
$\bk\cdot\omega = 0$.
The value
$K_{\bf k}(h)$ drops from (\ref{eq:con2}) and  may be regarded as a free parameter.
In addition, $\widehat{A}_{\bf k}(h)=\widetilde{A}_{\bf k}(h)= 1$ and therefore  $\widehat{A}_{\bf k}(h)-A_{\bf k}(h)= 0$. We then consider  oscillatory one-letter words $\bk$,
$\bk\cdot\omega \neq 0$,  and according to our policy, we try to get $\widehat{A}_\bk(h)=A_\bk(h)$. This  leads to
 \begin{equation}\label{eq:blow}
 K_\bk(h) = \frac{\widetilde{A}_\bk(h)-{A}_\bk(h)}{\exp(i\bk\cdot\omega h)-1},
 \end{equation}
 {\em provided that} $\exp(i\bk\cdot\omega h)\neq 1$.
  If $\bk\cdot\omega \neq 0$ and $\exp(i\bk\cdot\omega h)= 1$, we say that a {\em first-order numerical resonance} occurs. When this happens,  $K_\bk(h)$ drops from  (\ref{eq:con2}) and $\widehat{A}_{\bk}(h)=\widetilde{A}_{\bk}(h)$. As a consequence, in general,  $\widehat{A}_{\bk}(h)-{A}_{\bk}(h)$ will not vanish.

If, for given $h$,  there is no first-order numerical resonance, then the expansion of the local error only contains terms corresponding to words with two or more letters. In analogy with Theorem~\ref{th:estim} (details will not be given), it is then possible to bound the local error of the processed integrator by $Ch^2$, with $C$ independent of $\omega$. This in turn will lead to a $C^\prime h$ bound for the global error of the processed integrator and, after taking into account the pre- and postprocessing to a $C^{\prime\prime}h$ bound for the global error in the method $\widetilde{\phi}_h$ being analyzed. The constant $C^{\prime\prime}$ may be chosen to be independent of $h$, provided that $h$ is bounded away from the resonances; it worsens as $h$ gets closer to a numerical resonance in view of (\ref{eq:blow}). This explains the troughs in Fig.~\ref{fig:picos}.

On the other hand, if for given $h$ there is at least one numerically resonant $\bk\in\I$, then, processing is of no help in removing the $\omega$-dependent quadrature error of the original, unprocessed method.
This was only to be expected because at a numerical resonance,
as shown in Remark~\ref{rem:counter}, the global error in $\widetilde{\phi}_h$ may actually be large (cusps in Fig.~\ref{fig:picos}).

\begin{remark} \em By using the operation $\bigstar$ to compute the expansion of the $m$-fold compositions $\widetilde\phi_h^m$ and $\phi_h^m$, we find after some simple algebra that
if $\pm\bl$  are numerically resonant wavenumbers and $\exp(i\bk\cdot\omega h)\neq 1$ for $\bk\neq {\bf 0}, \pm \bl$, then the error over $m$ steps has an expansion
\begin{eqnarray*}
&& \widetilde\phi_h^m(x_0)-\phi_h^m(x_0) = \\ && \qquad mh \:\Big( \widetilde A_\bl(h) f_\bl(x_0)+\widetilde A_{-\bl}(h) f_{-\bl}(x_0\Big) +\\&&\qquad
 h\sum_{\bk\in \I \backslash \{{\bf 0},\pm \bl \}}
\frac{\exp(i\bk\cdot\omega mh)-1}{\exp(i\bk\cdot\omega h)-1}\:
 \Big(\widetilde A_\bk(h)-A_\bk(h)\Big) f_\bk(x_0)+\cdots
\end{eqnarray*}
 Thus  the $mh$ growth as $m$ increases with fixed $h$ we already encountered in Remark~\ref{rem:counter} holds for general integrators and general differential equations.
\end{remark}

\subsubsection{Higher-order resonances}
Assuming that $h$ does not satisfy any first-order numerical resonance, one may go a step further and look at words with two letters $\bk\bl$; these are oscillatory if $(\bk+\bl)\cdot\omega \neq 0$. Now (\ref{eq:con2}) implies
\begin{eqnarray*}
\exp(i(\bk+\bl)\cdot\omega h) \:K_{\bk\bl}(h) + \widehat{A}_\bk(h) \exp(i\bl\cdot \omega h) K_\bl(h) +\widehat{A}_{\bk\bl}(h)\qquad\qquad{} \\
=\widetilde{A}_{\bk\bl}(h)+K_{\bk}(h)\widetilde{A}_{\bl}(h)+K_{\bk\bl}(h).
\end{eqnarray*}
Whenever
$\exp(i(\bk+\bl)\cdot\omega h) = 1$ (second order numerical resonance), the value of $K_{\bk\bl}(h)$ cannot be chosen to ensure that $\widehat{A}_{\bk\bl}(h)={A}_{\bk\bl}(h)$. A similar consideration applies for nonoscillatory words with $n$ letters when $\exp(i(\bk_1+\cdots +\bk_n)\cdot\omega h) = 1$. When no numerical resonance takes place, the equation (\ref{eq:con2}) may be used to find  values  $\kappa_w(h)$, $w\in\W$ such that, on the one hand, define an element $\kappa(h)$ that  belongs to $\G$, (i.e.\ the shuffle relations hold) and, on the other, ensure that $\widehat{A}_{\bk_1\cdots \bk_n}(h)-A_{\bk_1\cdots \bk_n}(h)=0$, for all {\em oscillatory words}. This will be proved in Remark~\ref{rem:final} below by using modified systems.

\begin{remark}\em Since  pre- and postprocessing introduce in any case $\mathcal{O}(h)$ errors, the processing technique used here yields  $\mathcal{O}(h)$  bounds for the global error of $\widetilde{\phi}_h$ even for values of $h$ where there is no first-order or second-order numerical resonances. Fig.~\ref{fig:picos} shows that, for this simulation, $\mathcal{O}(h^2)$ global error bounds cannot exist if $h$ is large relative to the periods present in the dynamics.
\end{remark}

\subsection{Modified equations and modified Hamiltonians}
Modified equations \cite{gss}, \cite{aust}, \cite{ssc}, \cite{hlw} provide a useful means to describe the behaviour of numerical integrators.

\subsubsection{Modified system using one letter words}

  We  look for a  (one-letter word) modified system
\begin{equation}\label{eq:mod}
\frac{d}{dt} \widetilde x = \overline{W}_{(\omega,\widetilde\beta(h))}(\widetilde x),
\end{equation}
where $\widetilde\beta\in \g$, $\widetilde\beta_w = 0$ for $w\in \W_n$, $n>1$ and the coefficients $\widetilde\beta_\bk(h)$, $\bk \in\I$ are chosen in such a way that, for words with one letter, the extended word series expansion of the  $h$-flow $\widetilde\phi_h^{[1]}$ of (\ref{eq:mod}) matches the corresponding expansion for the integrator $\widetilde \phi_h$. We recall that the system being solved is also of the form (\ref{eq:mod}) with the coefficients $\beta$ given in (\ref{eq:b}).

By integrating the system (\ref{eq:mod}) by the procedure outlined in Section~\ref{sec:extended} and imposing that its flow matches $\widetilde \phi_h$ to the desired order, we find the condition
\begin{equation}\label{eq:betabeta}
 \frac{\exp(i\bk\cdot\omega h)-1}{i\bk\cdot\omega h}\: \widetilde\beta_\bk(h) = \widetilde A_\bk(h)
\end{equation}
(it is understood that the fraction takes the value $1$ if $\bk\cdot\omega =0$).
For $\bk = \bf 0$ or for any one letter word that is not oscillatory, this implies $\widetilde\beta_\bk(h)=1$. For an  oscillatory one-letter word $\bk \neq \bf 0$, $\bk\in\I$, if $h$ is such that $i\bk\cdot \omega h=2\pi j$ for some integer $j\neq 0$ (first order numerical resonance), then the fraction in (\ref{eq:betabeta}) vanishes and the equation for
$\widetilde\beta_\bk(h)$ will in general not be solvable. Thus first-order numerical resonances are obstructions to the construction of the modified system.  (In fact the counterexample in Remark \ref{rem:counter} proves that modified systems of the form envisaged here do not exist at numerical resonances.)
When $h$ is bounded away from resonances,
$\widetilde\phi_h^{[1]}-\widetilde\phi_h=\mathcal{O}(h^2)$ with the implied constant independent of $\omega$ (as in Theorem~\ref{th:estim}).

\begin{example}\em For Strang's method,  if $\bk$ is oscillatory and there is not a numerical resonance, (\ref{eq:betabeta}) yields the value
$$\widetilde\beta_\bk(h)= \frac{\bk\cdot\omega h}{2\sin (\bk\cdot\omega h/2)}.\qquad \Box $$
\end{example}

\begin{remark}\label{rem:modified} \em Assume that, for given $h$, the modified system above has been found. We may then try to find a change of variables $\widetilde x = W_{\kappa(h)}(\widetilde X)$ so that in the new variables the modified vector field matches
the field $\overline{W}_{(\omega,\beta)}(X)$ for words with one letter. According to Section~\ref{sec:extended}, we have to impose that
$
\beta\star \kappa(h) + \xi_{\omega} \kappa(h)$ and  $\kappa(h)\bigstar \widetilde \beta(h)$
coincide for words with one letter. This leads to
$$i(\bk\cdot \omega)\:\kappa_\bk(h) +1= \widetilde \beta_\bk(h).$$ If $\bk$ is not oscillatory, $\kappa_\bk(h)$ is free because, as noted above, $\widetilde \beta_\bk(h) =1$.
For $\bk$  oscillatory,  $\kappa_\bk(h)$ is uniquely determined. By using
(\ref{eq:betabeta}),  a little algebra shows that the value of $\kappa_\bk(h)$ found in this way is the same we obtained in (\ref{eq:blow}). Thus the change of variables $W_{\kappa(h)}$ we used for processing may be seen as determined by the requirement that, in the new variables and for nonoscillatory one-letter words, the modified vector field of the unprocessed integrator reproduces the vector field being integrated.
\end{remark}

\subsubsection{Other modified systems}

More precise modified systems may be constructed by successively adding to the modified vector field contributions from words of 2, 3, \dots\ letters. For the $n$-th of these modified systems,
the modified vector field has $\widetilde\beta_w = 0$ for words with more than $n$ letters and we impose that,
for words with $n$ or fewer letters,
the extended word series expansion of the  $h$-flow $\widetilde\phi_h^{[n]}$  matches the corresponding expansion for the integrator $\widetilde \phi_h$.

For two-letter words, proceeding as in the case of one-letter word modified systems, we obtain the condition
$$
 \frac{\exp(i(\bk+\bl)\cdot\omega h)-1}{i(\bk+\bl)\cdot\omega h}\:\widetilde\beta_{\bk \bl}(h) + h A_{\bk\bl}(h) \widetilde{\beta}_{\bk}(h) \widetilde{\beta}_{\bl}(h)= h \widetilde A_{\bk\bl}(h) .
$$
%$$
% A_{\bk+\bl}(h)\: \widetilde\beta_{\bk \bl}(h) + h A_{\bk\bl}(h) \widetilde{\beta}_{\bk}(h) \widetilde{\beta}_{\bl}(h)= h \widetilde A_{\bk\bl}(h) .
%$$
%
In general, the resulting  equation is of the form
$$\frac{\exp(i(\bk_1+\cdots +\bk_n)\cdot\omega h)-1}{i(\bk_1+\cdots +\bk_n)\cdot\omega h}\: \widetilde\beta_{\bk_1 \cdots \bk_n}(h)-h^{n-1}  \widetilde A_{\bk_1\cdots \bk_n}(h)=\mathcal{O}(h^{n-1}),$$
%$$A_{\bk_1+\cdots +\bk_n}(h)\: \widetilde\beta_{\bk_1 \cdots \bk_n}(h)-h^{n-1}  \widetilde A_{\bk_1\cdots \bk_n}(h)=\mathcal{O}(h^{n-1}),$$
where  the right-hand side depends polynomially on the coefficients $\widetilde{\beta}_{w}(h)$ of words $w$ with less than $n$ letters.
Such an equation can be solved for $\widetilde\beta_{\bk_1 \cdots \bk_n}(h)$
provided that there is no numerical resonance, $(\bk_1+\cdots +\bk_r)\cdot \omega h=2\pi j$, $r\leq n$,  $j\neq 0$. In the limit where the length of the words increases indefinitely one obtains, if there is no numerical resonance of any order, a modified system whose formal $h$-flow exactly reproduces the expansion of $\widetilde \phi_h$.

As explained in Section~\ref{sec:words} the modified systems found in this way are Hamiltonian whenever the system being integrated is Hamiltonian. Furthermore the modified Hamiltonian functions are easily expressible in terms of brackets.

\begin{example}\em For the linear forced oscillator in Remark~\ref{rem:counter}, each word basis functions associated with words with two or more letters vanishes. In this case  the modified systems above with $n>1$, coincide with the modified system using only one-letter words and the latter is exact, ie $\widetilde\phi_h^{[1]}= \widetilde \phi_h$. The (exact) modified Hamiltonian is
$$
\frac{1}{2}p^2 +\frac{\omega^2}{2} q^2 - q\, \widetilde\beta_1(h)\, F,
$$
and therefore in the particular case of Strang's method we have
$$
\frac{1}{2}p^2 +\frac{\omega^2}{2} q^2 - q\, \frac{\omega h}{2\sin (\omega h/2)}\, F.
$$
For nonresonant $h$, the effect of using the splitting method is to alter the value of the applied force. Unless $|\omega h|\ll 1$ the misrepresentation of the force introduced by the discretisation will be large. We emphasize that, as distinct from the situation when using conventional modified equations based on series of powers of $h$, the analysis here does not require $h$ to be small. $\Box$
\end{example}

\begin{figure}[p]
\begin{center}
%\vskip -7 truecm
\includegraphics[scale=.80]{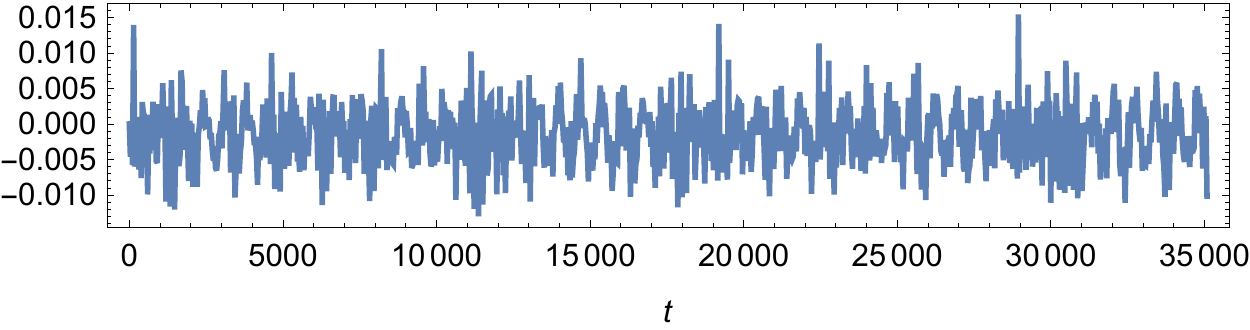}% MATLAB hmmresorte
\end{center}
%\vskip -4 truecm
\caption{Variation of the true Hamiltonian (energy) evaluated at the numerical solution as a function of $t$ for Example~\ref{example:fpu} ($h= 0.7974$).}
\label{fig:mod0}
\end{figure}
\begin{figure}[p]
\begin{center}
%\vskip -7 truecm
\includegraphics[scale=.80]{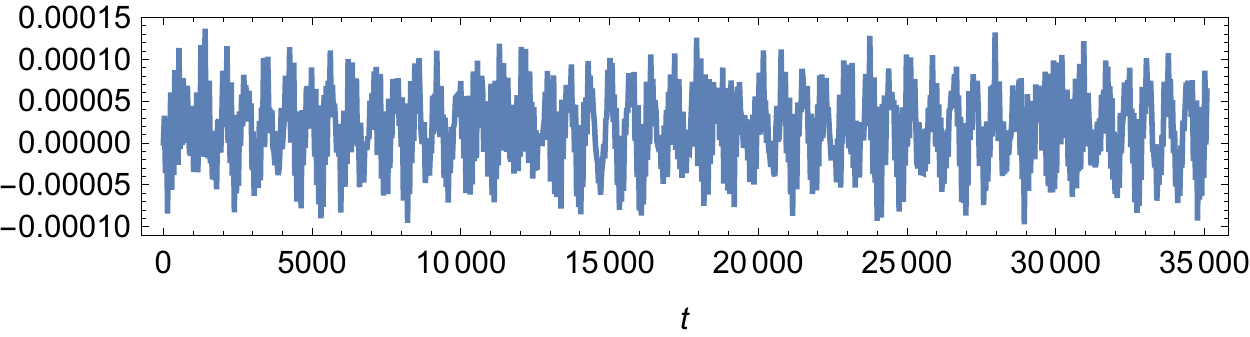}% MATLAB hmmresorte
\end{center}
%\vskip -4 truecm
\caption{Variation of the one-letter-word Hamiltonian evaluated at the numerical solution as a function of $t$ for Example~\ref{example:fpu} ($h= 0.7974$). The vertical scale is  $100$ times larger than in  Fig.~\ref{fig:mod0}.}
\label{fig:mod1}
\end{figure}
\begin{figure}[p]
\begin{center}
%\vskip -7 truecm
\includegraphics[scale=.80]{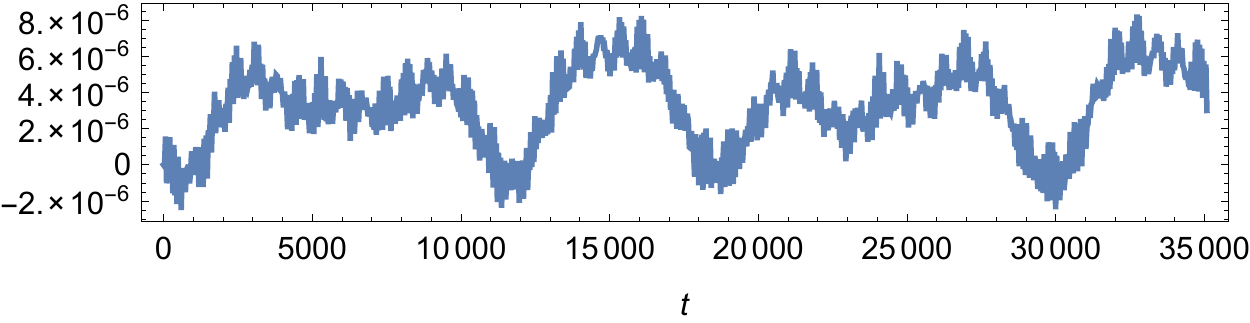}% MATLAB hmmresorte
\end{center}
%\vskip -4 truecm
\caption{Variation of  the two-letter-word Hamiltonian evaluated at the numerical solution as a function of $t$ for Example~\ref{example:fpu} ($h= 0.7974$).}
\label{fig:mod2}
\end{figure}

\begin{example} \em For the problem in Example~\ref{example:fpu}  we have measured the variation as $t$ increases of the true energy $H$ of the numerical solution and of the corresponding variations of the energies in the modified one-letter-word Hamiltonian and two-letter-word Hamiltonians. We used $h = 0.7974$ as this, which is more than 12 times larger than the period of the fastest oscillator, avoids first and second order resonances. The results given in Figs.~\ref{fig:mod0}--\ref{fig:mod1} clearly bear out how the one-word-letter modified system matches the numerical solution much better than the system being integrated, but not as well as the two-word-letter modified system.
\end{example}

\begin{remark}
\label{rem:final} \em The idea in Remark~\ref{rem:modified} may be extended. Assume that there is no numerical resonance of any order so that it is possible to find a modified equation  whose formal flow exactly reproduces the expansion of the integrator. By using Theorem~\ref{th:theoremnormal} we may bring the modified system to a normal form where the contribution of all oscillatory words have disappeared. This implies that a processor has been found such that the expansion of the local error of the processed method does not contain contributions of oscillatory words.
\end{remark}

 We close this section with an observation. As noted before, {\em numerical resonances} ($(\bk_1+\cdots +\bk_r)\cdot \omega h=2\pi j$,   $j\neq 0$) obstruct the construction of modified systems; nonoscillatory words
($(\bk_1+\cdots +\bk_r)\cdot \omega = 0$) cause no trouble in that connection. On the other hand, the nonoscillatory character of a word {\em is} an obstruction to its elimination by changing variables. Processing, that as we have just seen is equivalent to finding modified problems {\em and} then changing variables, is hampered by both numerical resonances and non-oscillatory terms. See in this connection (\ref{eq:blow}), whose denominator vanishes both at numerical resonances {\em and} for nonoscillatory words.

\section{Technical results}
\label{s:technical}

This section is devoted to more technical material.

\subsection{Algebraic results}
\label{ss:technicalwords}

\subsubsection{Differential operators}
In (\ref{eq:revision}) we associated with each vector field $f_a$ in (\ref{eq:ode})  a first-order linear differential operator $E_{a}$.
With each word $w= a_1\cdots a_n$, $n>0$ we now associate the $n$-th order (linear) differential operator $E_w$ obtained by composition:
$$
E_{a_1\cdots a_n} g(x) = E_{a_1}\cdot E_{a_2} \cdots E_{a_n} g(x);
$$
 $E_\emptyset$ is defined as the identity operator. Finally, with each $\gamma\in \C^\W$ we associate the formal series of linear differential operators
$$
D_\gamma = \sum_{w\in\W} \gamma_w E_w.
$$

Two non-empty words $w= a_1\cdots a_m$, $w^\prime = a^\prime_1\cdots a^\prime_n$ may be {\em concatenated} \cite{reu} to give rise to the word
$ww^\prime= a_1\cdots a_ma^\prime_1\cdots a^\prime_n$. In addition $\emptyset w =w\emptyset = w$ for each $w\in\W$. Clearly,  concatenation of words corresponds to composition of the associated operators:
$E_{ww^\prime} = E_{w}\cdot E_{w^\prime}$.

Each word $a_1\cdots a_n$ may  be {\em deconcatenated} in $n+1$ different ways: $\emptyset(a_1\cdots a_n)$,
$(a_1)(a_2\cdots a_n)$, \dots,
$(a_1\cdots a_n)\emptyset$; these feature in the definition (\ref{eq:convol}) of the convolution product. This observation leads to  the following rule for the composition of two series of operators:
\begin{equation}\label{eq:compD}
D_\gamma \cdot D_{\gamma^\prime} = D_{\gamma\star\gamma^\prime}.
\end{equation}

\subsubsection{The shuffle algebra}
\label{sss:shuffle}

The product $\sh$ may be extended in a bilinear way from words to linear combinations
  of words, i.e. if $\mu_j$, $\mu_{j^\prime}$ are scalars:
\[
\big(\sum_j \mu_j w_j\big) \sh \big(\sum_{j^\prime} \mu_{j^\prime} w_{j^\prime}\big) = \sum_{j,j^\prime} \mu_j\mu_{j^\prime} w_j\sh w_{j^\prime}.
\]
When endowed with this operation,
the vector space  $\C\langle A\rangle $ of all such  linear combinations   is  a unital, commutative, associative algebra, {\em the shuffle algebra}, denoted by sh$(A)$ (see \cite{reu}, \cite{fm}, \cite{anderfocm}). Note that sh$(A)$ is graded by the number of letters of the words.

Deconcatenation defines a coproduct and turns sh$(A)$ into a (commutative, connected, graded) {\em Hopf algebra} \cite{brouder}. It is well known  that the dual vector space of a Hopf algebra  is automatically endowed with a product operation. Here the dual of  sh$(A)$ may be identified in a natural way with $\C^\W$ by associating with each linear form $\ell$ on sh$(A)$  the family of coefficients $\gamma_w =\ell(w)$, $w\in\W$. After this identification, the product in the dual of sh$(A)$ coincides with the  convolution product $\star$ defined in (\ref{eq:convol}). The sets $\G$ and $\g$ in Section \ref{sec:words} are then respectively the  group of characters and the Lie algebra of infinitesimal characters of the Hopf algebra sh$(A)$; well known results on Hopf algebras show that $\exp_\star$ in (\ref{eq:exp}) maps
$\g$ onto $\G$ and has an inverse given by $\log_\star$ in (\ref{eq:log}), see e.g.\ \cite{anderfocm}, \cite{fm}.

\subsubsection{The actions of $\G$ and $\g$ on word series}
\label{sss:action}

As shown e.g.\ in \cite{orlando}, there is a narrow connection between the word basis functions $f_w(x)$ and the operators $E_w$, $w\in \W$:
$$
f_w(x) = E_w x,
$$
(in the right-hand side, with an abuse of notation, $x$ denotes the identity function that maps each $D$-vector into itself). As a consequence we have the following correspondence between word series and series of operators
\begin{equation}\label{eq:wd}
W_\gamma(x) = D_\gamma x.
\end{equation}
The use of series of {\em operators} is common in control theory and dynamical systems;  word series, being series of {\em functions}, provide a more convenient way to study numerical integrators.

The operators $E_a$, $a\in A$, are derivations: $E_a (gh) = (E_a g) h + g (E_a h)$ for each pair of  scalar functions $g$, $h$. Iteration yields:
\begin{eqnarray*}
E_{ab} (gh) &=& (E_{ab} g) (E_\emptyset h) + (E_b g)(E_a h)+ (E_a g)(E_b h)+ (E_\emptyset g) (E_{ab} h),\\
E_{abc}(gh) &=& (E_{abc} g) (E_\emptyset h) + (E_{bc} g)(E_a h)+ (E_{ac} g)(E_b h)+  (E_{c} g) (E_{ab} h)+\\
&& (E_{ab} g)(E_c h)+(E_{b} g)(E_{ac} h)+ (E_a g)(E_{bc} h)+ (E_\emptyset g) (E_{abc} h),
\end{eqnarray*}
etc.
Note that, in the first of these identities, the pairs of words $(ab,\emptyset)$, $(a, b)$, $(b, a)$, $(\emptyset, ab)$ that feature in the right-hand side are precisely those whose shuffle product gives rise to the word $ab$ that appears in the left-hand side. A similar observation may be made in the second identity.
In general, if $w\in \W_m$, $w^\prime\in \W_n$ and $w\sh w^\prime = \sum_j w_j$, then the  $w_j\in\W_{m+n}$ are precisely those words for which  $(E_w g)(E_{w^\prime}h)$ is one of the $2^{m+n}$ terms of the expansion
of $E_{w_j}(gh)$.\footnote{Algebraically, the action of the operators $E_w$ on products $gh$ defines a coproduct \cite{brouder}; the shuffle product is obtained from this coproduct by duality \cite[Section 1.5]{reu}.} This result may be used in combination with the shuffle relations (\ref{eq:defgr}) to prove (see e.g.\ \cite{fm}, Theorem 2) that,
for $\gamma\in\G$,
$$
D_\gamma (gh) = D_\gamma(g)\, D_\gamma(h).
$$
By considering the coordinate mappings $g(x) = x^j$, $h(x)=x^\ell$ and (\ref{eq:wd}) we conclude that
$$
x^i(W_\gamma(x))\, x^i(W_\gamma(x)) = D_\gamma(x^i) D_\gamma(x^\ell)= D_\gamma(x^ix^\ell)
$$
and then linearity shows that, for each polynomial mapping $P$, $P(W_\gamma(x)) = D_\gamma P(x)$.
It follows that
\begin{equation}\label{eq:P}
g(W_\gamma(x)) = D_\gamma g(x),\qquad \gamma\in\G;
\end{equation}
for any (scalar or vector valued) smooth mapping $g$. Thus $D_\gamma g(x)$ provides the formal expansion of the composition $g(W_\gamma(x))$  provided that the coefficients $\gamma$ belong to the group $\G$.

The proof of the formula (\ref{eq:act}), that  defines an action of the group $\G$ on the vector space of all word series,
 is now easy:
$$
W_\delta(W_\gamma(x)) = D_\gamma W_\delta(x) = (D_\gamma\cdot D_\delta)\, x = D_{\gamma\star\delta}\, x = W_{\gamma\star\delta}(x);
$$
we have successively used (\ref{eq:P}), (\ref{eq:wd}), (\ref{eq:compD}) and once more (\ref{eq:wd}).

In (\ref{eq:calculote}), the expression $(\partial_x W_\delta(x))W_\beta$ is the result of applying to the word series $W_\delta(x)$,  the
first-order differential operator associated with the formal vector field $W_\beta(x)$. The formula then reveals that the action
of the algebra $\g$ on word series corresponds to the operation $\beta\star\delta$.

\subsubsection{Linear differential equations}

In Section~\ref{sec:words} it was proved that the initial value problem (\ref{eq:odebeta}) has a unique solution with $\alpha(t)\in\C^\W$ for each $t$. We show here that in fact $\alpha(t)\in\G$. We use the following auxiliary result:

\begin{lemma} Assume that $\eta \in \C^{\W}$ is such that, for some positive integer $n$, and for each $w\in\W_l$, $w^\prime\in\W_m$, $l+m\leq n$, the shuffle relation (\ref{eq:defgr}) hold. Then, for  $\beta\in\g$, $w\in\W_l$, $w^\prime\in\W_m$, $l+m\leq n$, with $w\sh w^\prime= \sum_j w_j$:
$$
\sum_j (\eta \star \beta)_{w_j} = \eta_{w} (\eta \star \beta)_{w^\prime} + (\eta \star \beta)_{w} \eta_{w^\prime}.
$$
\end{lemma}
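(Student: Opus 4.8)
The statement is the infinitesimal, derivation-type counterpart of the shuffle relation (\ref{eq:defgr}): it says that the convolution $\eta\star\beta$ of a map $\eta$ that is a character up to level $n$ with an infinitesimal character $\beta\in\g$ satisfies, up to level $n$, an $\eta$-twisted Leibniz rule for the shuffle product. In the formalism of Section~\ref{sss:shuffle} this is the pairing of the functional $\eta\otimes\beta$ against the bialgebra identity $\Delta(w\sh w')=\Delta(w)\sh\Delta(w')$ relating the deconcatenation coproduct $\Delta$ to the shuffle product; the truncation at level $n$ is present because the lemma is meant to drive an induction on word length establishing $\alpha(t)\in\G$ one graded piece at a time. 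The plan is to make that bialgebra identity explicit by a bijection on words, pair it with $\eta$ on the prefix factors and $\beta$ on the suffix factors, and then let $\beta_\emptyset=0$ and the vanishing of $\beta$ on nontrivial shuffles collapse the resulting double sum to the two advertised terms.

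First I would rewrite the left-hand side with (\ref{eq:convol}): for any word $z$ one has $(\eta\star\beta)_z=\sum_{z=uv}\eta_u\beta_v$, the sum ranging over the deconcatenations of $z$ into a prefix $u$ and a suffix $v$. Writing $w\sh w'=\sum_j w_j$, this gives
\[
\sum_j(\eta\star\beta)_{w_j}=\sum_j\ \sum_{w_j=uv}\eta_u\beta_v.
\]
The combinatorial core is that the data (a term $w_j$ of $w\sh w'$ together with a cut $w_j=uv$) are in bijection with the data (a deconcatenation $w=u_1v_1$, a deconcatenation $w'=u_2v_2$, a term $u$ of $u_1\sh u_2$, and a term $v$ of $v_1\sh v_2$). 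Indeed, since interleavings preserve the internal order of each factor, the letters of $w$ lying in the prefix $u$ form an initial segment $u_1$ of $w$ and those in the suffix form the complementary segment $v_1$, and likewise for $w'$; reversing the construction recovers $w_j$ and the cut, so the correspondence is a bijection. Reindexing accordingly yields
\[
\sum_j(\eta\star\beta)_{w_j}=\sum_{\substack{w=u_1v_1\\ w'=u_2v_2}}\Big(\sum_{z''}\eta_{z''}\Big)\Big(\sum_{z'}\beta_{z'}\Big),
\]
where $z''$ runs over the terms of $u_1\sh u_2$ and $z'$ over those of $v_1\sh v_2$; every word that occurs has length at most $l+m\le n$.

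Next I would evaluate the two inner sums. Since $|u_1|+|u_2|\le l+m\le n$, the hypothesised relation (\ref{eq:defgr}) applies and gives $\sum_{z''}\eta_{z''}=\eta_{u_1}\eta_{u_2}$. For the other factor I would use that $\beta\in\g$ means precisely $\beta_\emptyset=0$ together with $\sum_{z'}\beta_{z'}=0$ whenever $v_1$ and $v_2$ are both nonempty; thus $\sum_{z'}\beta_{z'}$ survives only when at least one of $v_1,v_2$ is empty, in which case $v_1\sh v_2$ is a single word and the factor equals $\beta_{v_2}$ (for $v_1=\emptyset$) or $\beta_{v_1}$ (for $v_2=\emptyset$), the doubly-empty case giving $\beta_\emptyset=0$. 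Grouping the surviving terms by which suffix is empty, the $v_1=\emptyset$ terms (where $u_1=w$) sum over all deconcatenations $w'=u_2v_2$ to $\eta_w\,(\eta\star\beta)_{w'}$, and the $v_2=\emptyset$ terms to $(\eta\star\beta)_w\,\eta_{w'}$; in each case the $\beta_\emptyset$ boundary contribution of the convolution vanishes and so may be freely included. Their sum is exactly the claimed identity.

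I expect the only genuine obstacle to be the bijection together with the length bookkeeping: one must check both that interleaving-then-cutting corresponds exactly to cutting-each-then-interleaving-separately, and that every instance of (\ref{eq:defgr}) invoked for $\eta$ occurs at total level $l+m\le n$, so that the truncated hypothesis suffices. Once that is in place, the rest is the routine collapse driven by the defining properties of $\g$. (A reader fluent in the Hopf-algebraic picture of Section~\ref{sss:shuffle} may bypass the explicit bijection entirely, pairing $\eta\otimes\beta$ with $\Delta(w\sh w')=\Delta(w)\sh\Delta(w')$ and invoking directly that $\eta$ is a character and $\beta$ an infinitesimal character up to the relevant order.)
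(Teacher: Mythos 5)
Your proof is correct, but it takes a genuinely different route from the paper. The paper's proof is a ``differentiate the group identity'' argument: since $\beta\in\g$, it is the velocity at $t=0$ of a curve $\gamma(t)$ in $\G$ with $\gamma(0)=\uno$ (cf.\ (\ref{eq:derivada}), (\ref{eq:exp})); the hypothesis on $\eta$ is then invoked to write the multiplicative relation $\sum_j(\eta\star\gamma(t))_{w_j}=(\eta\star\gamma(t))_w\,(\eta\star\gamma(t))_{w'}$ for $l+m\leq n$, and applying $\left.\frac{d}{dt}\right|_{t=0}$ (using bilinearity of $\star$ and $\eta\star\gamma(0)=\eta$) produces exactly the two-term right-hand side. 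That proof is two lines, but it silently relies on the fact that convolving a truncated character $\eta$ with a group element preserves the truncated shuffle relations --- which is precisely the bialgebra compatibility $\Delta(w\sh w')=\Delta(w)\sh\Delta(w')$ between deconcatenation and shuffle that you establish explicitly through your bijection between (interleaving of $w,w'$, cut point) and (cut of $w$, cut of $w'$, interleaving of prefixes, interleaving of suffixes). So your argument works directly at the infinitesimal level: it is longer but self-contained, needs neither the existence of the curve $\gamma(t)$ nor any character-group facts, and makes visible exactly where the truncation hypothesis enters (all pairs of prefixes invoked satisfy $|u_1|+|u_2|\leq l+m\leq n$, including the degenerate pairs with an empty factor, which the hypothesis with $l=0$ covers); your handling of the collapse via $\beta_\emptyset=0$ and the vanishing of $\beta$ on nontrivial shuffles, and your observation that the vanishing boundary terms $\beta_\emptyset$ may be freely re-absorbed into $(\eta\star\beta)_w$ and $(\eta\star\beta)_{w'}$, are both correct. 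In effect you have proved, at the level of $\beta\in\g$, the same combinatorial content that the paper assumes at the level of $\gamma(t)\in\G$ and then differentiates; the paper's route buys brevity and reuse of the Hopf-algebraic framework of Section~\ref{sss:shuffle}, while yours buys an elementary verification that would survive even if one had not yet established that $\exp_\star(t\beta)\in\G$.
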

\begin{proof} Since $\beta\in\g$, there exists a curve $\gamma(t)$ in $\G$ such that (\ref{eq:derivada}) holds. The hypothesis of the lemma then allows us to write:
$$
\sum_j (\eta \star \gamma(t))_{w_j} = (\eta \star \gamma(t))_{w}  (\eta \star \gamma(t))_{w^\prime}.
$$
The result is obtained by applying $ \left.{d}/{dt}\right|_{t=0}$ to both sides of this equality.
\end{proof}

Now consider the solution $\alpha(t)\in\C^\W$ of (\ref{eq:odebeta}). We shall prove by induction on $n$ that for each $t$
\begin{equation}
\label{eq:alphacond_n}
 \sum_j \alpha(t)_{w_j} = \alpha(t)_{w}  \alpha(t)_{w^\prime}
\end{equation}
for  $w\in\W_l$, $w^\prime\in\W_m$, $l+m\leq n$, with $w\sh w^\prime= \sum_j w_j$.

This trivially holds for $n=0$, since  $\alpha(t)_{\emptyset}=1$ for all $t$.
Assume that (\ref{eq:alphacond_n}) is satisfied for some $n\geq 0$, and choose  $w\in\W_l$, $w^\prime\in\W_m$, $l+m\leq n+1$, with $w\sh w^\prime= \sum_j w_j$. From (\ref{eq:odebeta}) we find
\begin{eqnarray*}
  &&\frac{d}{dt} \left(\sum_j \alpha(t)_{w_j} - \alpha(t)_{w}
    \alpha(t)_{w^\prime} \right) =  \\ && \qquad \sum_j(\alpha(t) \star
  \beta(t))_{w_j} - (\alpha(t) \star \beta(t))_w
  \alpha(t)_{w^\prime} -\alpha(t)_w (\alpha(t) \star
  \beta(t))_{w^\prime}
\end{eqnarray*}
and the lemma implies that the right-hand side of this equality vanishes.
Since (\ref{eq:alphacond_n}) holds at $t=0$, it does so for each value of $t$.

\subsection{Proof of Theorem~\ref{th:theoremnormal}}
\label{ss:proofnormal}

We simplify the system (\ref{eq:systnormal}) by performing a sequence of changes of variables with coefficients
$\kappa^{([1])}$, $\kappa^{([2])}$,  \dots in $\G$ so that the change defined by $\kappa^{([n])}$ simplifies the coefficients of the vector field associated with words with $n$ letters and leaves unaltered the coefficients associated with shorter words. The element $\kappa^{([n])}$ is sought in the form $\exp_\star(\lambda^{([n])})$ where $\lambda^{([n])}\in\g$ and $\lambda^{([n])}_w =0$ if $w\in\W\setminus\W_n$. If $\beta^{[n-1]}$ and $\beta^{[n]}$ are respectively the vector fields before and after the $n$-th change of variables, and $w=\bk_1\cdots\bk_n$, the equation (\ref{eq:normaleqn}) implies, after taking into account that $\kappa^{([n])}$ vanishes for nonempty words with less than $n$ letters
$$
i((\bk_1+\cdots\bk_n)\cdot \omega) \kappa_w = \beta^{[n-1]}_w-\beta^{[n]}_w.
$$
If $(\bk_1+\cdots\bk_n)\cdot \omega\neq 0$ we may choose $\lambda^{[n]}_w$ to enforce $\beta^{[n]}_w=0$. In other case, we
set $\beta^{[n]}_w = \beta^{[n-1]}_w$ and $\lambda^{[n]}_w=0$. The element $\lambda^{[n]}$ constructed in this way belongs to $\g$ because the required shuffle relations hold (if shuffling two words leads to resonant words all the coefficients in $\lambda^{[n]}$ vanish; in the nonresonant case the coefficients $\lambda^{[n]}$ are proportional to the corresponding coefficients in $\beta^{[n-1]}$, which satisfy the shuffle relations). In turn $\beta^{([n]}\in\g$ because
$$
\beta^{[n]} = \kappa^{[n]}\star\beta^{[n-1]}\star\big(\kappa^{[n]}\big)^{-1}
- ( \xi_\omega \kappa^{[n]})\star\big(\kappa^{[n]}\big)^{-1};
$$
both terms of the right-hand side are in $\g$ (the second is the value at $t=0$ of $$(d/dt) \Big(\big(\Xi_{\omega t}\kappa^{[n]}\big)\star\big(\kappa^{[n]}\big)^{-1}\Big)$$ and, as noted above $\Xi_{\omega t}\kappa^{[n]}\in\G$).

\subsection{Proof of Theorem~\ref{th:estim}}
\label{ss:proofestimate}

 For $(y,\theta) \in\Omega$ the function $f$ in (\ref{eq:ode2}) is bounded and Lipschitz continuous. Therefore, for $|t|$ small
$(y(t),\theta(t)) = \phi_t(y_0,\theta_0)$ is well defined and $|y(t)-y_0| \leq C_1 |t|$, where $C_1$ is a bound for $|f|$. A simple contradiction argument shows that $|y(h)-y_0|< R$ for $|h| < R/C_1$.

To deal now with the numerical solution, define the intermediate points (stages), $j=1,\dots,r$,
$$
(y_j,\theta_j) = \phi^{(P)}_{b_jh}\Big(  \phi^{(U)}_{a_jh}(y_{j-1},\theta_{j-1})   \Big) =
\phi^{(P)}_{b_jh}(y_{j-1}, \theta_{j-1}+a_jh\omega).
$$
If $|b_jh|< R/(C_1/r)$, the iteration of the argument used above ensures  that $|y_j-y_{j-1}|< R/r$, $j=1,\dots,r$ and then the triangle inequality implies that
$\widetilde{\phi}_h(y_0,\theta_0)\in \Omega$.

We shall use the notations $\overline{W}^{(N)}_{(t\omega,\alpha(t))}(x)$,  $\overline{W}^{(N)}_{(h\omega,\widetilde{\alpha}(t))}(x)$, to refer to the result of suppressing all terms  corresponding to words with $N$ or more letters of the extended word series with coefficients $\alpha(t)$, $\widetilde{\alpha}(h)$ respectively (of course the alphabet is now $\I$ rather than $\Z^d$). In addition we set
$$
\calR_h^{(T)}(x) = \phi_h(x)-\overline{W}^{(N)}_{(h\omega,\alpha(h))}(x),\qquad
\calR_h^{(S)}(x)= \widetilde{\phi}_h(x)-\overline{W}^{(N)}_{(h\omega,\widetilde{\alpha}(h))}(x)
$$
(the superscripts $T$ and $S$ mean \lq true\rq\ and \lq splitting\rq). Our task is to bound $\calR_h(x_0) = \calR_h^{(S)}(x_0)-\calR_h^{(T)}(x_0)$. For
$\calR_h^{(T)}(x_0)$, by stopping the iterative procedure (see e.g.\ \cite{orlando}) that leads to (\ref{eq:solucionprev})
we find the following representation:
\begin{eqnarray*}
\calR_h^{(T)}(x_0)&=&\sum_{\bk_1\cdots \bk_N\in\W_N}\int_0^h dt_N\, \exp(i\bk_N\cdot \omega t_N)
\cdots \\ &&\qquad\qquad\qquad\qquad\int_0^{t_2} dt_1\,\exp(i\bk_1\cdot \omega t_1) f_{\bk_1\cdots \bk_N}(\phi_{t_1}(x_0)).
\end{eqnarray*}
 We know that $\phi_{t_1}(x_0)\in \Omega$ for $|h|\leq h_0$, and therefore we may guarantee that $|\calR_h^{(T)}(x_0)|\leq C|h|^N$, with $C$ depending only on $\I$ and bounds for the derivatives of the Fourier coefficients.

 For $\calR_h^{(S)}(x_0)$ we use a similar device. The key point is that (cf.\ (\ref{eq:solucionprev})--(\ref{eq:solucion})) $$\widetilde{\phi}_h(y_0,\theta_0) = (0,h(a_1+\cdots +a_r)\omega) + (y(h),\eta(h)),$$
 where $(y(t),\eta(t))$ is the solution of
 $$
 \frac{d}{dt} \left[ \begin{matrix}y\\ \eta\end{matrix}\right]
= \sum_{\bk \in\Z^d} \widetilde{\lambda}_\bk(t)\:f_\bk(y,\eta),
$$
with piece-wise constant functions defined by
$$
\widetilde{\lambda}_\bk(t) = r b_j \exp\big(i\bk\cdot \omega (a_1+\cdots+a_j)h\big),\qquad (j-1)h/r \leq t < jh/r, \quad 1\leq j\leq r.
$$
This differential system associated with $\widetilde{\phi}_h$ is very similar to the system (\ref{eq:odeeta}) associated with $\phi_h$, the difference being that  for the former the complex exponentials
are frozen at the times $(a_1+\cdots+a_j)h$. After this observation the residual for $\widetilde{\phi}_h$ is bounded with the technique used for the true $\phi_h$.

\bigskip

{\bf Acknowledgement}. A. Murua and J.M.
Sanz-Serna have been supported by proj\-ects MTM2013-46553-C3-2-P and MTM2013-46553-C3-1-P from Ministerio de Eco\-nom\'{\i}a y Comercio, Spain. Additionally A. Murua has been partially supported by the Basque Goverment  (Consolidated Research Group IT649-13).

\section*{Appendix: examples of perturbed integrable problems}
Any system
\begin{equation}\label{eq:M}
\frac{d}{dt} w = M w+F(w),
\end{equation}
where $M$ is a skew-symmetric $D\times D$ constant matrix, may be brought by a linear change of variables to the  form:
\begin{eqnarray}
(d/dt) z^j &=&  \phantom{-\omega_{\ell}^2 Q^{\ell}+{}}f^j(z,P,Q),\qquad 1\leq j\leq D-2d,\label{eq:z}\\
(d/dt) P^\ell & =&  -\omega_{\ell}^2 Q^{\ell}+g^\ell(z,P,Q),\qquad 1\leq \ell\leq d,\nonumber\\
(d/dt) Q^\ell  &= &\phantom{-\omega_{\ell}^2 }P^{\ell}+ h^\ell(z,P,Q),\qquad  1\leq \ell\leq d\nonumber
\end{eqnarray}
Here $d$ is the number of nonzero eigenvalue pairs $\pm i\omega_\ell$, $\omega_\ell>0$, of $M$.
In the unperturbed case, $f^i\equiv 0$, $g^\ell\equiv 0$, $h^\ell\equiv 0$, the system consists of $d$ uncoupled harmonic oscillators with frequencies $\omega_\ell$,  together with $D-2d$ trivial equations $(d/dt) z^j = 0$. The introduction of the variables $a^\ell$, $\theta^\ell$ such that
\begin{equation}\label{eq:aav}
P^\ell = \sqrt{2\omega a^\ell} \cos \theta^\ell, \qquad Q^\ell = \sqrt{\frac{2a^\ell}{\omega_\ell}} \sin \theta^\ell,\qquad 1\leq \ell\leq d,
\end{equation}
takes now the system to the format (\ref{eq:ode2}) with $y = (z^1,\dots, z^{D-2d},a)$.
The system (\ref{eq:M}) or (\ref{eq:z}) is a natural candidate to integration by splitting methods based on separating the linear part (that may be integrated in closed form) from the perturbation. The later may perhaps be treated by means of a numerical integrator with a very fine time step. In favorable instances, the perturbation may be  integrated analytically in closed form; this is the situation in the following particular case of (\ref{eq:z}), commonly found in mechanics ($D$ is even and $z = (p,q)$),
\begin{eqnarray}
 (d/dt) p^j &=& \phantom{ -\omega_\ell^2 Q^\ell+{}}f^j(q,Q),\qquad  1\leq j\leq D/2-d,\label{eq:pqPQ}\\
 (d/dt) q^j &=&\phantom{ -\omega_\ell^2}p^j,\nonumber\\
 (d/dt) P^\ell &=& -\omega_\ell^2 Q^\ell+g^\ell(q,Q),\qquad 1\leq \ell\leq d,\nonumber\\
 (d/dt) Q^\ell &=& \phantom{ -\omega_\ell^2}P^\ell.\nonumber
\end{eqnarray}
Under the dynamics of the perturbation, $p$ and $P$ remain constant and $q$ and $Q$ grow linearly with $t$.

The  system (\ref{eq:pqPQ}) is Hamiltonian if the forces $f^j$, $g^\ell$ derive from a potential. When that happens, the introduction of the canonical
($dP^j\wedge dQ^j = da^j\wedge d\theta^j$) action/angle variables in (\ref{eq:aav}) preserves the Hamiltonian character of the equations of motion and even the value of the Hamiltonian function.

So far the unperturbed problem has been linear, but nonlinear cases may also be treated. Typically, integrable nonlinear problems may be brought to the form (\ref{eq:unpertur}) with $\omega=\omega(y)$; a device commonly used e.g.\ in dynamical astronomy consists in fixing a relevant value $y_0$ of $y$, decomposing
$\omega(y) = \omega(y_0)+\Delta(y)$ and seeing $(0,\Delta(y))$ as part of the perturbation.

 There are many instances of perturbations of nonlinear integrable systems, after the introduction of suitable action/angle variables take the form (\ref{eq:ode2}). A well-known example is provided by perturbations of the Keplerian motion of a celestial body.

\begin{remark} \em For (\ref{eq:pqPQ}),  as we just noticed,  the system  corresponding to the perturbation may be solved in closed form in the variables
$(p,q,P,Q)$. On the other hand, the analysis in Section~\ref{sec:split} operated with a {\em different} set of variables $x = (y,\theta) =(p,q,a,\theta)$. This causes no difficulty: it is standard practice when using splitting methods that the different split systems are integrated employing  different sets of dependent variables. In  partial differential equations, parts corresponding to linear, constant-coefficient differential operators are typically integrated in Fourier space and nonlinearities in physical space. Splitting methods are based on true solution flows, which of course {\em commute with changes of variables}. The situation is very different for, say, Runge-Kutta schemes, where (except for affine changes) changing variables does not commute with the application of the numerical method, and the performance of the integrator very much depends on the choice of dependent variables. For instance, any consistent Runge-Kutta method integrates exactly the unperturbed problem when written  as in (\ref{eq:unpertur}) but incurs in errors when dealing with the unperturbed version of (\ref{eq:z})  ($f^i\equiv 0$, $g^\ell\equiv 0$, $h^\ell\equiv 0$).
\end{remark}

\end{document}